\documentclass[12pt,reqno]{amsart}
\usepackage{amsmath,amssymb,amstext,amsfonts,epsfig,graphicx}
\usepackage[margin=1in]{geometry}
\usepackage{float}
\usepackage{graphicx}
\usepackage{subcaption}
\usepackage{morefloats}
\usepackage{epsfig}
\usepackage{morefloats}
\usepackage{mathtools}
\usepackage{amsmath}
\usepackage{amsfonts}
\usepackage{amssymb}
\usepackage{amsthm}
\usepackage{MnSymbol}
\usepackage[usenames,dvipsnames]{color}

\newtheorem{theorem}{Theorem}
\newtheorem{lemma}[theorem]{Lemma}

\newtheorem{conjecture}[theorem]{Conjecture}

\newtheorem{algorithm}[theorem]{Algorithm}
\newtheorem{corollary}[theorem]{Corollary}

\title{Burning a Graph is Hard}
\author{Anthony Bonato}
\address{Department of Mathematics\\
Ryerson University\\
Toronto, ON\\
Canada, M5B 2K3} \email{abonato@ryerson.ca}
\author{Jeannette Janssen}
\address{Department of Mathematics and Statistics\\
Dalhousie University\\
Halifax, NS\\
Canada, B3H 3J5}
\email{jeannette.janssen@dal.ca}
\author{Elham Roshanbin}
\address{Department of Mathematics and Statistics\\
Dalhousie University\\
Halifax, NS\\
Canada, B3H 3J5}
\email{e.roshanbin@Dal.Ca}
\thanks{Supported by grants from NSERC}

\begin{document}
\begin{abstract} 
Graph burning is a model for the spread of social contagion. The burning number is a graph parameter associated with graph burning that measures the speed of the spread of contagion in a graph; the lower the burning number, the faster the contagion spreads. We prove that the corresponding graph decision problem is \textbf{NP}-complete when restricted to acyclic graphs with maximum degree three, spider graphs and path-forests. We provide polynomial time algorithms for finding the burning number of spider graphs and path-forests if the number of arms and components, respectively, are fixed.
\end{abstract}
\maketitle

\section{Introduction}

Suppose you were attempting to spread gossip, a meme, or some other social contagion in an online social network such as Facebook or Twitter. Our assumptions, similar to those in the recent study on the spread of emotional contagion in Facebook~\cite{kramer}, are that in-person interaction and nonverbal cues are not necessary for the spread of the contagion. Hence, agents in the network spread the contagion to their friends or followers, and the contagion propagates over time. If the goal was to minimize the time it took for the contagion to reach the entire network, then which users would you target with the contagion, and in which order?  Related questions emerge in study of the spread of social influence, which is an active topic in social network analysis; see, for example, \cite{dr,kkt,ktt1,mr,rd}. \emph{Graph burning} is a simplified deterministic model for the spread of social contagion in a social network that considers an answer to these questions, and was introduced in \cite{bonato2}. 

Graph burning is a newly discovered deterministic graph process
in which we attempt to burn all the nodes as quickly as possible, and is inspired by contact processes on graphs such as graph bootstrap percolation \cite{bbm}, and graph searching paradigms such as Firefighter \cite{bonato, fmac}. Throughout, we work with simple, undirected, and finite graphs. There are discrete time-steps or rounds. At time $t=0$ all the nodes are unburned. Then at each time $t\geq 1$, we burn one new unburned node if such a node is available. Once a node is burned in round $t$, each of its unburned neighbours becomes burned in round $t+1$. If a node is burned, then it remains in that state until the end of the process. The process ends when all nodes are burned. 

We denote the node that we burn in the $i$-th step by $x_i$, and we call it a \emph{source of fire}.  
If we burn a graph $G$ in $k$ steps, then the sequence $(x_1, x_2,\ldots, x_k)$ is called a \emph{burning sequence} for $G$. The \emph{burning number} of $G$, written by $b(G)$, is the length of a shortest burning sequence for $G$; such a burning sequence is referred to as \emph{optimum}. In other words, the burning number of $G$ is the minimum number of steps needed for the burning process to end. 

The following graph decision problem is our main focus in this paper.

\medskip

\noindent{\bf Problem: Graph Burning}

\noindent{\bf Instance:} A simple graph $G$ of order $n$ and an integer $k\geq 2$.

\noindent{\bf Question:} Is $b(G) \leq k$? In other words, does $G$ contain a burning sequence $(x_1, x_2, \ldots, x_k)$?
\medskip

As we show in Lemma 14 from \cite{bonato2}, the burning number of a graph is tightly bounded in terms of its distance domination numbers. We also proved in Corollary 5 from \cite{bonato2} that the burning number of a connected graph $G$ is the minimum burning number over the set of spanning subtrees of $G$. On the other hand, the burning problem has some similarities to some other known graph processes such as Firefighter.
It is known that the distance domination problems are polynomially solvable for trees (see \cite{dbook}), however, the Firefighter problem is \textbf{NP}-complete even for trees of maximum degree three (see \cite{fking}). These are our motivation to investigate the complexity of the burning problem for graphs; in particular for trees.

This paper is organized as follows. We first provide a short review on the basic results that we have found on the burning number in \cite{bonato2, thesis}. These results are needed in this paper. In Section $1$, we prove that the Graph Burning problem is \textbf{NP}-complete when restricted to trees of maximum degree three. As a corollary, this shows the \textbf{NP}-completeness of the burning problem for chordal graphs, bipartite graphs, planar graphs, disconnected graphs, and binary trees. As a special example of trees with maximum degree three, we find the burning number of perfect binary trees. In Section $2$, we show that the burning problem remains \textbf{NP}-complete even for trees with a structure as simple as spider graphs, and also for disconnected graphs such as path-forests. 
In Section $3$, we provide polynomial algorithms for finding the burning number of path-forests and spider graphs, when the number of arms and components is fixed.
%We finish the paper with open problems.

\subsection{Preliminaries}
In this subsection, we give some background and notation from graph theory. For further background, see \cite{west}. Then we review some facts about the burning problem from \cite{bonato2, thesis}  that will be useful in the present study.

If $v$ is a node of a graph $G$, then the \emph{eccentricity} of $v$ is defined as $\max\{d(v, u): u\in V(G) \}$. The \emph{center} of $G$ consists of the nodes in $G$ with minimum eccentricity. Every node in the center of $G$ is called a \emph{central} node of $G$. The \emph{radius} of $G$ is the minimum eccentricity over the set of all nodes in $G$. The \emph{diameter} of $G$ is the maximum eccentricity over the set of all nodes in $G$. Given a positive integer $k$, the \emph{$k$-th closed neighborhood} of $v$ is defined to be the set $\{u\in V(G) : d(u, v)\leq k\}$ and is denoted by $N_k[v]$; we denote $N_1[v]$ simply by $N[v]$. 
We call a graph a \emph{path-forest} if it is the disjoint union of a collection of paths. 
For $s\ge 3$, let $K_{1, s}$ denote a \emph{star}; that is, a complete bipartite graph with parts of order $1$ and $s$. 

We call a
tree that has only one node $c$ of degree at least three a \emph{spider graph}, and the node $c$ is called the \emph{spider head}. In a spider graph every leaf is connected to the spider head by a path which is called an
\emph{arm}. If all the arms of a spider graph with maximum degree $s$ are of the same length $r$, then we denote such a spider graph by $SP(s,r)$.
We denote the \emph{disjoint union} of two graphs $G$ and $H$ by $G\cupdot H$.

The following facts about graph burning can be found in \cite{bonato2,thesis}.
In a graph $G$ the sequence $(x_1, x_2, \ldots, x_k)$ forms a burning
sequence if and only if, for each pair $i$ and $j$ with $1\leq i < j\leq k$, we have that $d(x_i, x_j) \geq j-i$, and the following set equation holds:
\begin{equation}
N_{k-1}[x_1] \cup N_{k-2}[x_2]\cup \ldots \cup N_0[x_k] = V(G).\label{eqq}
\end{equation}

\begin{corollary}\label{cov3}
If $(x_1, x_2, \ldots, x_k)$ is a sequence of nodes in a graph $G$, such that $N_{k-1}[x_1] \cup N_{k-2}[x_2]\cup \ldots \cup N_0[x_k] = V(G)$, then $b(G) \leq k$.
\end{corollary}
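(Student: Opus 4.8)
The plan is to turn the hypothesis --- a sequence $(x_1,\dots,x_k)$ whose covering identity $N_{k-1}[x_1]\cup N_{k-2}[x_2]\cup\dots\cup N_0[x_k]=V(G)$ holds --- into an honest burning sequence of length at most $k$, by running the burning process greedily and consulting the $x_i$'s only as suggested sources. Concretely, I would build a sequence $(y_1,\dots,y_m)$ of sources inductively: in round $i$, if $x_i$ is not yet on fire, set $y_i=x_i$; otherwise let $y_i$ be an arbitrary vertex that is not yet on fire. If in some round no unburned vertex remains, the process has already ended and $b(G)$ is less than that round number, hence at most $k$, so we are done; otherwise the process runs for at least $k$ rounds. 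In either case $(y_1,\dots,y_m)$ is by construction a legitimate burning sequence for $G$, since each $y_i$ is unburned when it is chosen. So the only thing left to prove is that all of $V(G)$ is on fire by the end of round $k$, i.e.\ that $m\le k$.

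The crux is the following claim, which I would prove by induction on $t$: for every $t$ with $1\le t\le k$, the set of vertices on fire after round $t$ contains $N_{t-1}[x_1]\cup N_{t-2}[x_2]\cup\dots\cup N_0[x_t]$. The base case $t=1$ is immediate, since nothing is burning at the start of round $1$, so $y_1=x_1$. For the inductive step I would use the elementary description of the spread: the set on fire after round $t+1$ is the closed neighbourhood of the set on fire after round $t$, together with the new source $y_{t+1}$. Applying the closed-neighbourhood operator to the set guaranteed by the induction hypothesis turns each $N_{t-i}[x_i]$ into $N_{t+1-i}[x_i]$, so it only remains to check that $x_{t+1}$ itself is on fire after round $t+1$; and this is exactly what the greedy rule arranges --- either $y_{t+1}=x_{t+1}$, or else $x_{t+1}$ was already burning by the end of round $t$. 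This dichotomy, namely that whenever we are forced to deviate from the prescribed source the prescribed source is already covered and nothing is lost, is essentially the entire content of the argument; everything else is routine bookkeeping about how fire spreads.

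Taking $t=k$ in the claim shows that the fire after round $k$ contains $N_{k-1}[x_1]\cup\dots\cup N_0[x_k]$, which equals $V(G)$ by hypothesis; hence the process terminates within $k$ rounds and $b(G)\le k$. Equivalently, one may invoke the characterization of burning sequences recalled above: the greedily constructed $(y_1,\dots,y_m)$ automatically satisfies the distance conditions $d(y_i,y_j)\ge j-i$, because $y_j$ is chosen unburned in round $j$ while $y_i$'s fire has by then reached every vertex within distance $j-1-i$ of it; so the only remaining requirement for it to be a burning sequence of length $k$ is the set equation \eqref{eqq}, which is precisely the $t=k$ instance of the claim. I expect the main obstacle to be stating the greedy construction and its invariant cleanly enough that the ``deviation'' case goes through without fuss; there is no genuine difficulty, only a temptation to over-formalise.
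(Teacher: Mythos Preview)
Your argument is correct. The paper itself does not prove this corollary: it is stated in the preliminaries, immediately after the if-and-only-if characterisation of burning sequences, as a fact taken from \cite{bonato2,thesis}, and no argument is supplied. Your greedy construction --- follow the prescribed $x_i$ unless it is already burning, in which case burn any remaining unburned vertex --- together with the inductive invariant that the burned set after round $t$ contains $\bigcup_{i\le t} N_{t-i}[x_i]$, is exactly the standard way to establish the result, and the ``deviation'' case is handled cleanly. There is nothing in the present paper to compare it to beyond the implicit suggestion that it follows from the characterisation just above, which your proof makes precise.
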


We proved in \cite{bonato2} (Lemma 11) that the burning number of a graph $G$ of radius $r$ is at most $r+1$.
Namely, by burning a central node of $G$ at the first step, every other node will be burned after at most $r$ more steps.

\begin{theorem}\cite{bonato2}\label{path}
For a path $P_n$ we have that $b(P_n) = \lceil n^{1/2}\rceil$.
More precisely, if $n = k^2$ for some integer $k$, then burning $P_n$ in $k$ steps is equivalent to decomposing $P_n$ into $k$ subpaths of orders $1,3, \ldots, 2k-1$.
If $\lceil n^{1/2}\rceil = k$, and $n$ is not a square number, then every optimum burning sequence for $P_n$ corresponds to a decomposition of $P_n$ into $k$ smaller subpaths $Q_1, Q_2, \ldots, Q_k$, in which the order of each $Q_i$ is a number between one and $2i-1$.
\end{theorem}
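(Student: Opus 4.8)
The plan is to prove $b(P_n)=\lceil n^{1/2}\rceil$ by matching a lower and an upper bound, and then to extract the two ``decomposition'' statements from the extremal case of each bound. For the lower bound, let $(x_1,\dots,x_k)$ be any burning sequence for $P_n$. By the set equation~\eqref{eqq}, $V(P_n)=\bigcup_{i=1}^{k}N_{k-i}[x_i]$, and in a path each $N_{k-i}[x_i]$ induces a subpath on at most $2(k-i)+1$ vertices; summing, $n\le\sum_{i=1}^{k}\bigl(2(k-i)+1\bigr)=\sum_{j=0}^{k-1}(2j+1)=k^{2}$. Hence $k\ge n^{1/2}$, and since $k$ is an integer, $k\ge\lceil n^{1/2}\rceil$, so $b(P_n)\ge\lceil n^{1/2}\rceil$.

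For the upper bound, put $k=\lceil n^{1/2}\rceil$, so that $(k-1)^2<n\le k^2=\sum_{i=1}^{k}(2i-1)$. Fix a decomposition of $P_n$ into consecutive subpaths $R_1,\dots,R_k$ read from left to right with $|R_i|\le 2i-1$ and $\sum_i|R_i|=n$ (for instance $|R_i|=2i-1$ for $i<k$ and $|R_k|=n-(k-1)^2\in\{1,\dots,2k-1\}$). For each $i$ place the source $x_{k-i+1}$ at a vertex of $R_i$ within distance $i-1$ of both endpoints of $R_i$; such a vertex exists since $|R_i|\le 2i-1$. The source $x_{k-i+1}$ has $k-(k-i+1)=i-1$ rounds of spreading available, so it burns all of $R_i$; thus~\eqref{eqq} holds and $b(P_n)\le k$ by Corollary~\ref{cov3}, provided the sequence is legal, that is $d(x_a,x_b)\ge b-a$ whenever $a<b$. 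Computing the pairwise distances through the positions of the sources along $P_n$ shows $d(x_a,x_b)\ge b-a$, the verification reducing in each case to an inequality of the shape $2k-a-b+1\ge 1$ that holds because $a,b\le k$. With the lower bound this gives $b(P_n)=\lceil n^{1/2}\rceil$.

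For the structural claims, when $n=k^2$ the lower-bound estimate is an equality, which forces every $N_{k-i}[x_i]$ to have exactly $2(k-i)+1$ vertices (so $x_i$ is far enough from both ends of $P_n$ that $N_{k-i}[x_i]$ is a ``full'' subpath) and forces these $k$ subpaths to be pairwise disjoint; as they also cover $P_{k^2}$, they partition it into subpaths of orders $1,3,\dots,2k-1$. Conversely, the construction above shows that any partition of $P_{k^2}$ into subpaths of orders $1,3,\dots,2k-1$, arranged in \emph{any} left-to-right order, yields a legal length-$k$ burning sequence, because the inequality certifying legality is insensitive to that order. When $n$ is not a square, the same reasoning carried out with slack shows that every optimum burning sequence of length $k$ induces a decomposition of $P_n$ into subpaths $Q_1,\dots,Q_k$, indexed so that $Q_i$ is covered by the source with $i-1$ rounds remaining, satisfying $1\le|Q_i|\le 2i-1$. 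The step I expect to need the most care is the distance check $d(x_a,x_b)\ge b-a$ for the constructed sequences — in particular ensuring that when $n$ is not a square the single short subpath $R_k$ can host a source without breaking any distance constraint, and that the decomposition-to-burning direction really works for \emph{every} ordering of the subpaths along the path; a smaller point is verifying that in an optimum sequence no source covers only vertices already covered by other sources, which is what yields $|Q_i|\ge 1$ and lets the $Q_i$ be taken as genuine subpaths.
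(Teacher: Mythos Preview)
The paper does not actually prove this statement: Theorem~\ref{path} is quoted in the Preliminaries from~\cite{bonato2} and is used only as a tool, so there is no in-paper proof to compare against. Your argument is the standard one and is essentially the proof given in~\cite{bonato2}: the counting bound $n\le\sum_{i=1}^k(2(k-i)+1)=k^2$ for the lower bound, an explicit placement of centres in consecutive blocks of sizes at most $2i-1$ for the upper bound, and the tight case analysis for the structural statements. Your distance check is correct; for adjacent blocks of sizes $2a-1$ and $2b-1$ the centres are $a+b-1$ apart while the required gap is $|a-b|$, and non-adjacent pairs only help, which is exactly the ``$2k-a-b+1\ge 1$'' computation you allude to.

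One small correction to your plan: it is \emph{not} true that in an optimum sequence every source covers some vertex uncovered by the others (for instance on $P_5$ with $k=3$ take $x_1=3$, $x_2=5$, $x_3=1$; then $N_2[x_1]$ already covers everything). The reason $|Q_i|\ge 1$ can be arranged is simply that the $k$ sources are distinct vertices of $P_n$ (each $x_i$ is by definition unburned when chosen), so after ordering the sources along the path you may always assign $x_{k-i+1}$ to $Q_i$ and then extend the $Q_i$'s greedily between consecutive sources; since on a path $N_r[x]$ is an interval, every vertex between two consecutive sources is covered by at least one of them, and the resulting $Q_i$'s are contiguous with $|Q_i|\le 2i-1$. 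With that adjustment your proof goes through.
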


\begin{theorem}\cite{thesis}\label{path-forest}
If $G$ is a path-forest of order $n$ with $t\geq 1$ components, then 
$$b(G) \leq \lceil n^{1/2}\rceil + t-1.$$
\end{theorem}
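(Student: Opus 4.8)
The plan is to argue by induction on the number $t$ of components, peeling off one component at a time and exploiting the fact that the fire sources placed early in a burning sequence carry large balls, so a component that is far from optimal in isolation can still be handled cheaply using those oversized balls.

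For the base case $t=1$, Theorem~\ref{path} gives $b(P_n)=\lceil n^{1/2}\rceil=\lceil n^{1/2}\rceil+1-1$. For the inductive step, write $G=P_{n_1}\cupdot G'$, where $G'$ is a path-forest with $t-1$ components and order $n'=n-n_1$. Put $m=\lceil n^{1/2}\rceil$ and $k=m+t-1$; the goal is to burn $G$ within $k$ steps. Set $j_1=k-\lfloor\sqrt{k^2-n_1}\rfloor$, so that $1\le j_1\le k$ since $1\le n_1<k^2$ (as $t\ge 2$ forces $k^2\ge(m+1)^2>m^2\ge n\ge n_1$). First I would record a routine packing fact: a path on $N$ vertices can be burned within $j$ steps using fire sources $x_1,\dots,x_j$ with balls $N_{k-1}[x_1],\dots,N_{k-j}[x_j]$ whenever $\sum_{\ell=1}^{j}(2(k-\ell)+1)=j(2k-j)\ge N$. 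Indeed, placing the sources greedily from one endpoint, each covering a fresh full segment of the appropriate length, makes every consecutive pair of sources lie at distance $r_\ell+r_{\ell+1}+1\ge 1$ apart, so distances add along the path to give $d(x_a,x_b)\ge b-a$ for all $a<b$, and the balls cover everything since the segment lengths sum to at least $N$. Since $j_1(2k-j_1)=k^2-(k-j_1)^2=k^2-\lfloor\sqrt{k^2-n_1}\rfloor^2\ge n_1$, this applies to $P_{n_1}$, which is thus burned using the first $j_1$ steps.

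The remaining $k-j_1=\lfloor\sqrt{k^2-n_1}\rfloor$ steps, carrying the even larger balls $N_{k-j_1-1}[\cdot],N_{k-j_1-2}[\cdot],\dots,N_0[\cdot]$, are then used on $G'$. By the induction hypothesis, $b(G')\le\lceil (n')^{1/2}\rceil+(t-1)-1$; since the balls available to $G'$ in the tail of the sequence are at least as large as those in an optimal burning of $G'$ of that length, and every source in $P_{n_1}$ is at infinite distance from every source in $G'$, it suffices to verify that $k-j_1\ge\lceil (n')^{1/2}\rceil+(t-1)-1$. This inequality is the heart of the argument. Writing $s=t-1$ and $q=m^2-n_1\ge 0$, one has $k^2-n_1=q+2ms+s^2\ge(\sqrt q+s)^2$ because $m=\sqrt{q+n_1}\ge\sqrt q$; hence $k-j_1=\lfloor\sqrt{k^2-n_1}\rfloor\ge\lfloor\sqrt q+s\rfloor=\lfloor\sqrt q\rfloor+s\ge\lceil\sqrt q\rceil+s-1$, and finally $\lceil\sqrt q\rceil\ge\lceil\sqrt{n'}\rceil$ since $q=m^2-n_1\ge n-n_1=n'$. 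Combining the two phases burns $G$ within $k=\lceil n^{1/2}\rceil+t-1$ steps, completing the induction.

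The main obstacle, and essentially the only delicate point, is this last chase: one must choose the length $j_1$ of the first phase sharply, so that $k-j_1$ equals exactly $\lfloor\sqrt{k^2-n_1}\rfloor$ rather than something wasteful such as $\lceil n_1^{1/2}\rceil$, and then the estimate $\sqrt{q+2ms+s^2}\ge\sqrt q+s$ together with the floor/ceiling bookkeeping has to land precisely on $\lceil (n')^{1/2}\rceil+t-2$. A naive ``burn each component optimally in turn'' allocation fails already on $P_2\cupdot P_2$ (two components would cost $2+2=4>3$ steps), so using the oversized early balls is indispensable.
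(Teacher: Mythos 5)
Your argument is correct. Note first that the paper does not actually prove this theorem: it is quoted from \cite{thesis} as a known fact, so there is no in-paper proof to compare against. The standard argument (and the one in the cited thesis) is a direct greedy allocation: with $k=\lceil n^{1/2}\rceil+t-1$ steps one has balls of sizes $2k-1,2k-3,\dots,1$ summing to $k^2\geq (\lceil n^{1/2}\rceil)^2+\text{(slack)}$, and one sweeps the components in turn, losing at most one ball's worth of coverage at each of the $t-1$ component boundaries; Corollary~\ref{cov3} then converts the covering into a burning sequence. Your induction on $t$ is a cleaner, fully quantified version of the same idea: you peel off one component, spend exactly $j_1=k-\lfloor\sqrt{k^2-n_1}\rfloor$ of the largest balls on it (your packing fact $j(2k-j)\geq N$ is the right bookkeeping, and the distance verification is actually superfluous since Corollary~\ref{cov3} only requires the covering), and the chain $k^2-n_1=q+2ms+s^2\geq(\sqrt q+s)^2$ with $q=m^2-n_1\geq n'$ lands precisely on $k-j_1\geq\lceil\sqrt{n'}\rceil+t-2\geq b(G')$, so the induction hypothesis applies to the tail of the sequence (whose radii dominate those of an optimal burning of $G'$). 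All the edge cases check out: $j_1\geq 1$ because $n_1\geq 1$, $q\geq 0$ because $m^2\geq n$, and $m\geq\sqrt q$ gives the key square-root inequality. What your route buys is a sharp accounting of how much of the budget each component consumes, at the cost of the floor/ceiling chase you correctly identify as the delicate point; what the direct argument buys is brevity. Either is acceptable.
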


A subgraph $H$ of graph $G$ is called an \emph{isometric} subgraph of $G$ if the distance between any pair of vertices $u$ and $v$ in $H$ equals the distance between $u$ and $v$ in $G$.
For example, any subtree of a tree $T$ is an isometric subgraph of $T$. The following corollary is a generalization of Theorem $7$ in \cite{bonato2} for disconnected graphs.

\begin{corollary}\cite{thesis}\label{subforest}
If $G$ is a graph with components $G_1, G_2, \ldots, G_t$, and $H$ is an isometric subforest of $G$, then we have that $b(H)\leq b(T).$
\end{corollary}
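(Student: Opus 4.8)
The plan is to turn an optimum burning sequence of $G$ into a covering of $V(H)$ by closed neighbourhoods in $H$ of the same radii, and then to apply Corollary~\ref{cov3}. Write $k=b(G)$ and fix an optimum burning sequence $(x_1,\dots,x_k)$ of $G$, so that $N^G_{k-1}[x_1]\cup N^G_{k-2}[x_2]\cup\cdots\cup N^G_0[x_k]=V(G)$, where a superscript records the host graph of the closed neighbourhood. If $V(H)=\emptyset$ there is nothing to prove, so assume otherwise, and for each $i$ put $A_i=N^G_{k-i}[x_i]\cap V(H)$. Since the balls $N^G_{k-i}[x_i]$ cover $V(G)\supseteq V(H)$, the sets $A_i$ cover $V(H)$, so it is enough to find vertices $y_1,\dots,y_k\in V(H)$ with $A_i\subseteq N^H_{k-i}[y_i]$ whenever $A_i\neq\emptyset$ (taking $y_i\in V(H)$ arbitrarily otherwise); then $N^H_{k-1}[y_1]\cup\cdots\cup N^H_0[y_k]=V(H)$, and Corollary~\ref{cov3} applied to the graph $H$ gives $b(H)\le k=b(G)$.

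So fix $i$ with $A_i\neq\emptyset$. Two observations pin down $y_i$. First, $A_i$ lies in a single component of $H$: the vertex $x_i$ lies in one component $G_j$ of $G$, hence $A_i\subseteq V(G_j)\cap V(H)$, and because $H$ is isometric in $G$, any two vertices of $H$ lying in a common component of $G$ lie in a common component of $H$ (otherwise their $G$-distance would be finite while their $H$-distance is infinite). Call this component $H_\ell$; it is a tree and $d_{H_\ell}(u,v)=d_H(u,v)=d_G(u,v)$ for all $u,v\in V(H_\ell)$. Second, every vertex of $A_i$ is within distance $k-i$ of $x_i$ in $G$, so any two vertices of $A_i$ are at $G$-distance, hence at $H_\ell$-distance, at most $2(k-i)$; that is, $A_i$ has diameter at most $2(k-i)$ in the tree $H_\ell$.

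It remains to use the elementary fact that a set $S$ of vertices of a tree with pairwise distances at most $2m$ is contained in $N_m[y]$ for some vertex $y$ of the tree: take $T'$ to be the smallest subtree containing $S$, note its leaves all lie in $S$ so that $\mathrm{diam}(T')\le 2m$, and let $y$ be a central vertex of $T'$, whose eccentricity in $T'$ equals $\lceil \mathrm{diam}(T')/2\rceil\le m$; since a subtree of a tree is an isometric subgraph, $d_{H_\ell}(y,s)\le m$ for all $s\in S$. Applying this with $S=A_i$ and $m=k-i$ produces a vertex $y_i\in V(H_\ell)\subseteq V(H)$ with $A_i\subseteq N^H_{k-i}[y_i]$ (the ball in $H$ coincides with the ball in $H_\ell$ here, since $y_i$ and $A_i$ both lie in $H_\ell$). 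Then $\bigcup_{i=1}^{k}N^H_{k-i}[y_i]\supseteq\bigcup_{i=1}^{k}A_i=V(H)$, and Corollary~\ref{cov3} yields $b(H)\le b(G)$.

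The point requiring the most care is the choice of $y_i$: the naive choice of a vertex of $H$ nearest to $x_i$ fails in general — for instance with $H$ the path on four consecutive vertices of a six-cycle — since the intersection with $H$ of a ball of $G$ need not be contained in any ball of $H$ of the same radius centred at the nearest vertex of $H$. Going through the diameter bound, which is where the isometry of $H$ is genuinely used, together with the centre property of trees, is what makes the argument work; the remaining manipulations with closed neighbourhoods are routine.
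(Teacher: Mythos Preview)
The paper does not contain a proof of this corollary; it is quoted from \cite{thesis} and stated without argument (note also the typo $b(T)$ for $b(G)$ in the statement). So there is no paper proof to compare against.

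Your argument is correct. The key step---using isometry to bound the $H$-diameter of $A_i$ by $2(k-i)$, confining $A_i$ to a single tree component of $H$, and then taking $y_i$ to be a centre of the smallest subtree spanning $A_i$---is exactly right, and your remark about why nearest-point projection fails (the six-cycle example) shows you see where the isometry hypothesis is genuinely needed. The final appeal to Corollary~\ref{cov3} is legitimate and closes the argument cleanly.
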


Note that the only graph with burning number one is $K_1$. Moreover, the following theorem characterizes the graphs with burning number $2$.
\begin{theorem}\cite{thesis}\label{b222}
If $G$ is a graph of order $n$, then $b(G)=2$ if and only if $n\geq 2$, and $G$ has maximum degree $n-1$ or $n-2.$
\end{theorem}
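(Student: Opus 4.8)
The plan is to reduce the statement to the set-equation characterization of burning sequences of length two. First I would note that, since $K_1$ is the only graph with burning number one, the equality $b(G)=2$ is equivalent to the conjunction ``$n\ge 2$ and $b(G)\le 2$''. Thus the whole theorem reduces to showing that, for a graph $G$ of order $n\ge 2$, one has $b(G)\le 2$ if and only if $\Delta(G)\in\{n-1,n-2\}$.

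For the forward implication, suppose $b(G)\le 2$, so that $G$ admits a burning sequence $(x_1,x_2)$. By the set equation \eqref{eqq} with $k=2$ this says $N[x_1]\cup\{x_2\}=V(G)$, hence $V(G)\setminus N[x_1]\subseteq\{x_2\}$ has at most one element. Therefore $\deg_G(x_1)\ge n-2$, and since trivially $\deg_G(x_1)\le n-1$, we conclude $\Delta(G)\ge \deg_G(x_1)\in\{n-1,n-2\}$, so $\Delta(G)\in\{n-1,n-2\}$ as $\Delta(G)\le n-1$ always.

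For the reverse implication, let $x_1$ be a vertex of maximum degree, so that $\lvert V(G)\setminus N[x_1]\rvert\le 1$. If $N[x_1]=V(G)$, pick any $x_2\ne x_1$ (possible since $n\ge 2$); otherwise let $x_2$ be the unique vertex outside $N[x_1]$. In either case $N_1[x_1]\cup N_0[x_2]=V(G)$, so Corollary~\ref{cov3} yields $b(G)\le 2$, and combined with $n\ge 2$ we get $b(G)=2$.

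I do not expect a genuine obstacle here; the only points needing care are the degenerate cases: checking that $n\ge 2$ always permits a legal choice of $x_2$ when $\deg_G(x_1)=n-1$, and observing that the two cases $\deg_G(x_1)=n-1$ and $\deg_G(x_1)=n-2$ correspond exactly to $x_2$ lying inside or outside $N[x_1]$. It is also worth recording that the length-two distance requirement $d(x_1,x_2)\ge 1$ is automatic from $x_1\ne x_2$, which is precisely why citing Corollary~\ref{cov3} suffices and no distance inequalities need to be verified directly.
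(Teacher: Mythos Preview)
The paper does not actually prove Theorem~\ref{b222}; it is quoted from \cite{thesis} in the Preliminaries and used as a black box. There is therefore no in-paper argument to compare against.

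Your proof is correct and complete. The reduction to ``$n\ge 2$ and $b(G)\le 2$'' via the remark that $K_1$ is the unique graph with burning number one is exactly right, and both directions of the equivalence are handled cleanly using equation~\eqref{eqq} and Corollary~\ref{cov3}. Your closing remark about the distance condition $d(x_1,x_2)\ge 1$ being automatic is also apt: this is precisely why invoking Corollary~\ref{cov3} (which only asks for the covering condition) is the cleanest way to finish, rather than verifying the full burning-sequence definition directly.
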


Since finding the maximum degree of a graph is solvable in polynomial time, 
by Theorem~\ref{b222}, we can recognize graphs with burning number $2$ in polynomial time. Thus, in the rest of the paper we restrict our attention to the case $k\geq 3$.

\section{Burning Trees and Forests with Maximum Degree Three}

We now consider Graph Burning in acyclic graphs with maximum degree three. In particular, we show in Theorem \ref{3tree} that the burning problem is NP-complete for trees of maximum degree three.  Then as a special example of trees with maximum degree three, we find the burning number of perfect binary trees.
We show the \textbf{NP}-completeness of the burning problem by a reduction from a variant of the $3$-Partition problem \cite{gary}. Here is the statement of this problem.

\medskip
\noindent{\bf Problem: Distinct $3$-Partition}

\noindent{\bf Instance:} A finite set $X =\{a_1, a_2, \ldots, a_{3n}\}$ of positive distinct integers, and a positive integer $B$ where $\sum_{i=1}^{3n} a_i = nB$, and $B/4 < a_i < B/2$, for $1\leq i\leq 3n$.

\noindent{\bf Question:} Is there any partition of $X$ into $n$ triples such that the elements in each triple add up to $B$?
\medskip

In \cite{hulet}, it is shown that the Distinct $3$-Partition problem is \textbf{NP}-complete \emph{in the strong sense} (see \cite{gary}); that is, this problem is \textbf{NP}-complete, even when restricted to the cases where $B$ is bounded above by a polynomial in $n$. In the rest of the paper, by $O_m$, we mean the set of the $m$ first positive odd integers; that is, $O_m = \{1,3, \ldots, 2m-1\}$.

\begin{theorem}\label{3tree}
The burning problem is \textbf{NP}-complete for trees of maximum degree three.
\end{theorem}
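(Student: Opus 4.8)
Membership in \textbf{NP} is routine: given $G$, an integer $k$, and a candidate sequence $(x_1,\dots,x_k)$ of vertices of $G$, one checks in polynomial time --- by a breadth-first search from each $x_i$ --- whether $d(x_i,x_j)\ge j-i$ for all $i<j$ and whether $N_{k-1}[x_1]\cup N_{k-2}[x_2]\cup\dots\cup N_0[x_k]=V(G)$. By the characterisation of burning sequences recalled just before Corollary~\ref{cov3}, this holds precisely when $(x_1,\dots,x_k)$ is a burning sequence, so $b(G)\le k$ if and only if such a certificate exists.

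For hardness we reduce from Distinct $3$-Partition, which is \textbf{NP}-complete in the strong sense \cite{hulet}; thus we may assume $B\le p(n)$ for a fixed polynomial $p$, and this is exactly what keeps the tree we build of polynomial size. Given an instance $X=\{a_1,\dots,a_{3n}\}$ with $\sum_i a_i=nB$ and $B/4<a_i<B/2$, we construct a tree $T$ of maximum degree three together with an integer $k$. Informally, $T$ consists of a long ``spine'' path to which we attach $3n$ pendant ``element paths'', the $i$-th of length linear in $a_i$; the $3n$ attachment points are arranged along the spine in $n$ consecutive ``blocks'' of three, and the remaining parameters --- the precise lengths of the element paths, the lengths of the spine segments between consecutive blocks, and $k$ itself --- are calibrated so that any burning of $T$ in $k$ steps must be \emph{tight}, meaning the balls $N_{k-1}[x_1],\dots,N_0[x_k]$ cover $V(T)$ with no overlap and each ball reaches its full radius. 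Keeping maximum degree three is easy (a spine vertex carrying a pendant has degree three, and a block of three pendants is realised by three consecutive spine vertices, or by a small degree-three subtree), and since each element path has length $O(a_i)=O(B)=O(p(n))$ and there are $O(n)$ of them, both $|V(T)|$ and $k$ are polynomial in $n$, so the reduction is polynomial time.

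The equivalence has the two expected directions. If $X$ splits into triples $S_1,\dots,S_n$ with $\sum_{i\in S_j}a_i=B$, we build a length-$k$ burning sequence from $n$ ``large'' sources --- the $j$-th placed in block $j$, whose ball exactly covers block $j$ and its three pendants precisely because the three element paths there have lengths summing to the designed value --- together with the remaining ``small'' sources used to tile the spine segments between consecutive blocks; that each such segment is tileable by the radii still available is the path statement of Theorem~\ref{path}, and one checks the distance conditions and equation~(\ref{eqq}) directly. Conversely, suppose $b(T)\le k$. Since the construction forces every length-$k$ burning to be tight, one shows in turn that a ball meeting any part of a block must cover that whole block and three entire element paths, that the other balls must tile the spine exactly as in Theorem~\ref{path}, and hence that the element paths fall into $n$ groups whose $a_i$-lengths each sum to exactly $B$; then $B/4<a_i<B/2$ forces every such group to have exactly three elements (two would sum to less than $B$, four to more than $B$), so the groups give the desired partition of $X$.

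The main obstacle is the converse direction. In a tree, unlike a path, a ball $N_r[x]$ can be large and irregularly shaped, so a priori a source could straddle two blocks, cover some pendant only partially, or sit at an interior vertex of an element path; all such ``cheating'' placements must be excluded. This calls for a normalisation lemma --- an optimal burning of $T$ may be assumed to place each source either at a spine vertex or at a pendant tip, covering a ``downward closed'' region --- together with a rigid counting argument, supported by Theorem~\ref{path} and Corollary~\ref{subforest} applied to the spine segments, that turns the single capacity inequality into the block-by-block structure. Calibrating the additive offsets in the element-path lengths and block positions so that ``tight'' becomes exactly equivalent to ``respects a $3$-partition'', while keeping all distance constraints satisfied, is the delicate and calculation-heavy heart of the proof.
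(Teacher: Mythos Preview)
Your membership argument and the choice to reduce from Distinct $3$-Partition are fine, but the gadget you sketch has a conceptual mismatch that the paper's construction avoids.

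You propose $n$ ``large'' sources, one per block, each of whose ball ``exactly covers block $j$ and its three pendants precisely because the three element paths there have lengths summing to the designed value''. Two problems. First, in a burning sequence of length $k$ the radii $k-1,k-2,\dots,0$ are \emph{all distinct}, so you cannot have $n$ sources playing interchangeable roles on $n$ blocks that all encode the \emph{same} target sum $B$; any calibration that makes block $j$ need radius $k-j$ will make the blocks structurally different, and then the bijection between triples and blocks is no longer symmetric. Second, a ball $N_r[x]$ in a tree covers a pendant of length $\ell$ hung at spine vertex $y$ iff $r\ge d(x,y)+\ell$; covering three pendants from one centre is governed by a \emph{maximum}, not a \emph{sum}, so the constraint $a_{i_1}+a_{i_2}+a_{i_3}=B$ has no natural ball-radius interpretation in your layout. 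Your ``normalisation lemma'' would have to convert a max-type covering condition into a sum-type partition condition, and nothing in the plan indicates how.

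The paper's reduction sidesteps both issues. It does \emph{not} assign one source per triple. Instead, with $m=\max X$, it builds $m{+}1$ three-armed spiders $SP(3,2m{+}1{-}i)$ whose far leaves force $x_i$ to be the $i$-th spider centre for $i=1,\dots,m{+}1$; these absorb the large radii. It then adds, as separate path components, one path of each odd length in $O_m\setminus\{2a_i-1\}$ to absorb the ``unused'' small radii. What remains is that the last $m$ sources, with radii $m{-}1,\dots,0$, must exactly tile $n$ paths of order $2B{+}3$ by subpaths of the odd lengths $\{2a_i-1\}$ --- a genuine sum constraint, three per path, yielding the $3$-partition. The tightness comes from a long path of order $(2m{+}1)^2$ sitting inside the tree, so Theorem~\ref{path} and Corollary~\ref{subforest} pin $b(T)=2m{+}1$ and force every $N_{k-i}[x_i]\cap P$ to have full length. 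That is the missing idea in your plan: you need gadgets that rigidly consume a prescribed set of radii and leave exactly the multiset $\{2a_i-1\}$ to decompose equal-length paths, rather than trying to make one ball encode a triple.
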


\begin{proof}
Given a graph $G$ of order $n$ and a sequence $(x_1, x_2, \ldots, x_k)$ of the nodes in $G$, we can easily find $N_{k-i}[x_i]$ in polynomial time, for $1\leq i\leq k$. Thus, we can check in polynomial time if $V(G) = \bigcup_{i=1}^k N_{k-i}[x_i]$. Hence, the burning problem is in \textbf{NP}. 

Now, we show the \textbf{NP}-completeness of the burning problem for trees of maximum degree three by a reduction from the Distinct $3$-Partition problem.

Suppose that we have an instance of the Distinct $3$-Partition problem; that is, we are given a non-empty finite set $X =\{a_1, a_2, \ldots, a_{3n}\}$ of distinct positive integers, and a positive integer $B$ such that $\sum_{i=1}^{3n} a_i = nB$, and $B/4 < a_i < B/2$, for $1\leq i\leq 3n$. Since the Distinct $3$-Partition problem is \textbf{NP}-complete in the strong sense, without loss of generality we can assume that $B$ is bounded above by a polynomial in $n$.
Assume that the maximum of the set $X$ is $m$ which is by assumption bounded above by $B/2$. We now construct a tree of maximum degree $3$ as follows.

Let $Y = \{2a_i -1 : a_i \in X \}$. Hence, $Y  \subseteq O_m$, and $2nB +3n = \sum_{i=1}^{3n} 2a_i -1$ is the sum of the numbers in $Y$. Let $Z = O_m \setminus Y$.
Note that $1\leq |Y| \leq m$, and consequently, $|Z| \leq m - 1$. 
Let $|Z| = k$, for some $k\leq m- 1$. For $1\leq i\leq k$, let $Q'_i$ be a path of order $l_i$, where $l_i$ is the $i$-th largest number in $Z$. 
For $1\leq i\leq m+1$, we define $T_i$ to be a spider $SP(3, 2m+1 -i)$ with centre $r_i$.
We also take $Q_i$ to be a path of order $2B+3$, for $1\leq i\leq n$. 
Then we combine the graphs that we created above from left to right in the following order: 
$$Q_1, T_1, Q_2, T_2,  \ldots, Q_n, T_n, Q'_1, T_{n+1}, Q'_2, T_{n+2}, \ldots, Q'_k, T_{n+k}, T_{n+k+1},  \ldots , T_{m+1}$$ 
such that each graph in this order is joined by an edge from one of its leaves to a leaf of the next graph in the presented order. The resulting graph is named $T(X)$; note that it is a tree of maximum degree three.

For example, let $X = \{10, 11, 12, 14, 15, 16\}$, and $B = 39$. Then $n=2$, and $m = \max\{a_i: a_i \in X\} = 16 $. Therefore, $Y = \{19, 21, 23, 27, 29, 31\}$, and $Z = O_{16} \setminus Y = \{1,3,5,7,9, 11, 13, 15, 17, 25\}$. Thus, $k = |Z| = |O_{16} \setminus Y| = 10$.
The graph $T(X)$ is depicted in Figure \ref{max3}. Due to the space limits, we do not draw the nodes in the paths $Q_i$ and $Q'_j$ in this figure.

\begin{figure}[H]
\begin{center}
\includegraphics[scale=0.7]{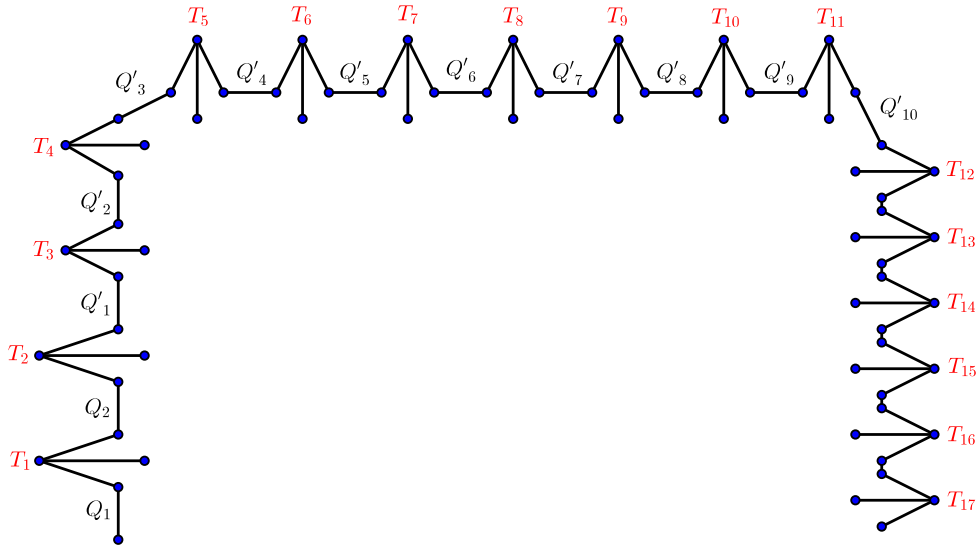}
\end{center}
\caption{A sketch of the tree $T(X)$.}\label{max3}
\end{figure}

For $1\leq i\leq m+1$, let $v_i$ be a leaf of $T(X)$ that is also a leaf of $T_i$, as a subgraph of $T(X)$. Note that for $1\leq i\leq m+1$ the two arms of $T_i$ that do not contain $v_i$, together with its centre $r_i$, form a path. We call this path $T'_i$. The order of $T'_i$ is $2(2m+1-i) +1 \in O_{2m+1}$. Hence, the subgraph of $T(X)$ induced by $$\left(\bigcup_{i=1}^{m+1}T'_i\right)  \bigcup  \left( \bigcup_{i=1}^{n} Q_i\right) \bigcup \left(\bigcup_{i=1}^k Q'_i\right) $$ forms a path of order $$\sum_{i=1}^{2m+1} (2i -1) = (2m+1)^2,$$
that we denote it by $P$. Therefore, 
$$ T(X) - P  = \bigcup_{i=1}^{m+1} (T_i - P), $$
which is a disjoint union of paths of orders $\{2m+1 - i\}_{i=1}^{m+1}$. Note that $T_i - P$ is the arm of $T_i$ that contains $v_i$. Thus, we have that 
$$|V(T(X))| = (2m+1)^2  + \sum_{i=1}^{m+1} (2m+1-i) = (2m+1)^2 + \frac{3(m^2 + m)}{2}.$$
Since $m$ is bounded by $B$ and by assumption, $B$ is bounded above by a polynomial in terms of $n$, then $T(X)$ is obtained in polynomial time in terms of the length of the input.

We can easily see that, there is a partition of $X$ into triples such that the elements in each triple add up to $B$ if and only if we can decompose the paths $Q_1, Q_2, \ldots, Q_n$ into subpaths of orders $2a_i -1 \in Y$. 
First, assume that there is a partition of $X$ into triples such that the elements in each triple add up to $B$. Equivalently, we have a partition for the paths $Q_1, Q_2, \ldots, Q_n$ in terms of subpaths $\{P_l: l\in Y\}$. Since $O_m = Y \cup Z$, we conclude that there is a partition for the subgraph $(\bigcup_{i=1}^n Q_i) \bigcup (\bigcup_{i=1}^{k} Q'_i)$ in terms of the subpaths $\{P_l : l\in O_m\}$.
Now, for $m+2 \leq i \leq 2m+1$, let $x_i$ be the centre of a path $P_l$ in such a partition, where $l = 2(2m+2-i)-1 \in O_m = Y \cup Z$. 
For $1\leq i\leq m+1$, let $x_i = r_i$ (the centre of $T_i$). 
%Note that by definition, for each $1\leq i\leq k$, the  path $Q'_i$ is placed between $T_{i+1}$ and $T_{i+2}$.
Thus, we have that $$V(T(X)) = \bigcup_{i=1}^{2m+1} N_{2m+1-i}[x_i].$$
Consequently, by equation (\ref{eqq}), we conclude that $(x_1, x_2, \ldots ,x_{2m+1})$ forms a burning sequence of length $2m+1$ for $T(X)$. Therefore, $b(T(X)) \leq 2m+1$.

Conversely, suppose that $b(T(X)) \leq 2m+1$. 
Note that the path $P$ of order $(2m+1)^2$ is a subtree of $T(X)$. Therefore, by Theorem \ref{path} and Corollary \ref{subforest}, we have that $$b(T(X)) \geq b(P) = 2m+1.$$
Thus, we conclude that $b(T(X)) = 2m+1$. Assume that $(x_1, x_2, \ldots, x_{2m+1})$ is an optimum burning sequence for $T(X)$. 

We first claim that each $x_i$ must be in $P$. 
Since $T(X) = P \bigcup ( \bigcup_{j=1}^{m+1} (T_j - P) )$, every $x_i$ is either in $P$ or in $T_j \setminus P$, for some $1\leq j \leq m+1$. 
On the other hand, every node in $P$ must receive the fire from one of the $x_i$'s. Note that the only connection of $P$ to $T(X) \setminus P$ is through the nodes $r_i$'s. Hence, for $1\leq i\leq 2m+1$, $N_{2m+1-i}[x_i] \cap P$ must be a path of order at most $2(2m+1-i)+1$. If for some $1\leq i\leq 2m+1$, a node $x_i$  is out of $P$, then $N_{2m+1-i}[x_i] \cap P$ is a path of order less than $2(2m+1-i)+1$. Therefore, the total sum of the orders of the subpaths  $\{N_{2m+1-i}[x_i] \cap P\}_{i=1}^{2m+1}$ will be less than $(2m+1)^2  = |V(P)|$, which is a contradiction. Thus, every $x_i$ must be selected from $P$.

Now, we claim that for $1\leq i\leq m+1$, we must have $x_i = r_i$. We prove this by strong induction on $i$. Note that each $v_i$, $1\leq i\leq m+1$, receives the fire from a fire source $x_j \in P$ (by the above argument) where $1\leq j\leq 2m+1$. Therefore, $d(x_j, v_i) \leq 2m+1 -j$, for some $1\leq j\leq 2m+1$. 
For $i=1$, since the only node in $P$ that is within distance $2m+1 -i = 2m$ from $v_1$ is $r_1$, then we must have $x_1 = r_1$. Suppose that for $1\leq i\leq m$ and for every $1\leq j\leq i$, $x_j = r_j$. Since the only node in $P$ within distance $2m+1 - (i+1)$ from $v_{i+1}$ is the node $r_{i+1}$, and by induction hypothesis, we conclude that $x_{i+1} = r_{i+1}$. 
Therefore, the claim is proved by induction.

Note that $N_{2m+1 -i}[r_i] = V(T_i)$. Therefore,  the fire started at $x_i = r_i$ will burn all the nodes in $T_i$. 
The above argument implies that the nodes in $T(X) \setminus \bigcup_{i=1}^{m+1} T_i$ must be burned by receiving the fire started at $x_{m+2}, x_{m+3}, \ldots, x_{2m+1}$ (the last $m$ sources of fire). Since $T(X) \setminus \bigcup_{i=1}^{m+1} T_i$ is a disjoint union of paths, then we derive that for $m+2\leq i\leq 2m+1$, $N_{2m+1-i}[x_i] \bigcap (T(X) \setminus \bigcup_{i=1}^{m+1} T_i )$ is a path of order at most $2m+1- i$ ($\leq 2m-1$). 
On the other hand, the path-forest $T(X) \setminus \bigcup_{i=1}^{m+1} T_i$ is of order 
$$\sum_{i=1}^m (2m+1-i) = m^2.$$
Thus, we conclude that for $m+2\leq i\leq 2m+1$, $N_{2m+1-i}[x_i] \bigcap (T(X) \setminus \bigcup_{i=1}^{m+1} T_i )$ is a path of order equal to $2m+1-i$; since otherwise, we can not burn all the nodes in $T(X) \setminus \bigcup_{i=1}^{m+1} T_i$ in $m$ steps, which is a contradiction.
Therefore, there must be a partition of $T(X) \setminus \bigcup_{i=1}^{m+1} T_i$ (induced by the burning sequence $(x_{m+2}, x_{m+3}, \ldots, x_{2m+1})$) for $T(X) \setminus \bigcup_{i=1}^{m+1} T_i$) into subpaths $\{P_l: l \in O_m\}$.

Now, considering the partition described in the previous paragraph, we claim that there is a partition of $T(X) \setminus \bigcup_{i=1}^{m+1} T_i$ into subpaths of orders in $O_m$ in which the paths $Q_1, Q_2, \ldots, Q_n$ are decomposed into paths of orders in $Y$, and each path $Q'_i$ is covered by itself. 
Note that by definition, for $1\leq i\leq k$, each path $Q'_i$ is a component of $T(X) \setminus \bigcup_{i=1}^{m+1} T_i$. Hence, it suffices to prove that there is a partition of $T(X) \setminus \bigcup_{i=1}^{m+1} T_i$ into subpaths of orders in $O_m$ such that each $Q'_i$ is covered by itself. Assume that in a partition of $T(X) \setminus \bigcup_{i=1}^{m+1} T_i$ into subpaths of orders in $O_m$, there is a path $Q'_i$ of order $l\in O_m \setminus Y = Z $ that is partitioned by a union of paths of orders in $O_m$ rather than by $P_l$ itself. We know that $P_l$ must have covered some part of a path $Q'_j$ with $j\neq i$, or must be used in partitioning $Q_1, Q_2, \ldots, Q_n$. Hence, we can easily modify the partition by switching the place of $P_l$ and those paths that have covered $P_l$ (as they have equal lengths). Therefore, we have decreased the number of such displaced paths in our partition for $T(X) \setminus \bigcup_{i=1}^{m+1} T_i$. Since the number of $Q'_i$'s, where $1\leq i \leq k$,  is finite, we will end up after finite number of switching in a partition for $T(X) \setminus \bigcup_{i=1}^{m+1} T_i$ in which every $Q'_i$, $1\leq i\leq k$, is covered by itself.

Since each $Q_i$ is of order $2B+3$, there must be a partition of $Y$ into triples such that the elements in each triple add up to $2B+3$. Equivalently, there must be a partition of $X$ into triples such that the elements in each triple add up to $B$.
Since $T(X)$ is a tree of maximum degree $3$, then, we have a polynomial time reduction from the Distinct $3$-Partition problem to the Graph Burning problem for trees with maximum degree $3$. 
\end{proof}

Since any tree is a chordal graph, and also planar and bipartite, then we conclude the following corollary.

\begin{corollary}
The burning problem is \textbf{NP}-complete for chordal graphs, planar graphs, and bipartite graphs.
\end{corollary}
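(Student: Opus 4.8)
The plan is to observe that the corollary is an immediate consequence of Theorem~\ref{3tree} together with three elementary facts from structural graph theory: every tree is chordal (a tree contains no cycle, so the defining condition for chordality — every cycle of length at least four has a chord — holds vacuously), every tree is planar (any tree embeds in the plane without edge crossings, e.g.\ by a breadth-first layout from a root), and every tree is bipartite (a tree has no cycles at all, in particular no odd cycle, so a two-colouring is obtained by parity of distance from a fixed root). Thus the class of trees, and a fortiori the class of trees of maximum degree three, is contained in each of the three classes named in the statement.

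With these containments in hand, I would argue NP-hardness by reusing the reduction already built in the proof of Theorem~\ref{3tree}. That reduction, given an instance $(X,B)$ of Distinct $3$-Partition, produces in polynomial time a tree $T(X)$ of maximum degree three together with the target value $k = 2m+1$, and establishes that $(X,B)$ is a yes-instance of Distinct $3$-Partition if and only if $b(T(X)) \le 2m+1$. Since $T(X)$ is a tree, it is simultaneously a chordal graph, a planar graph, and a bipartite graph, so the very same polynomial-time map $(X,B) \mapsto (T(X), 2m+1)$ is a valid Karp reduction from Distinct $3$-Partition to Graph Burning restricted to each of these three classes. Hence Graph Burning is NP-hard on chordal graphs, on planar graphs, and on bipartite graphs.

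For NP-membership, I would note that the first paragraph of the proof of Theorem~\ref{3tree} already shows the \emph{general} Graph Burning problem is in NP: given a candidate sequence $(x_1,\dots,x_k)$ in an arbitrary graph $G$, one computes each $N_{k-i}[x_i]$ and checks the covering equation~(\ref{eqq}) in polynomial time. Restricting the input to a subclass of graphs cannot destroy membership in NP, so Graph Burning remains in NP on each of the three classes. Combining this with the NP-hardness from the previous paragraph yields NP-completeness in each case.

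There is no substantial obstacle here; the only points that need a word of care are (i) confirming that the tree output by the reduction genuinely lies in each target class — which reduces to the standard containments above — and (ii) recording the (routine but worth stating) principle that if a decision problem is NP-hard when its instances are drawn from a class $\mathcal{C}$, then it is NP-hard on any class $\mathcal{D} \supseteq \mathcal{C}$, since every hard instance in $\mathcal{C}$ is also an admissible instance in $\mathcal{D}$. I would keep the write-up to a few sentences, citing Theorem~\ref{3tree} for both the reduction and the NP-membership argument.
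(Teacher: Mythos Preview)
Your proposal is correct and takes essentially the same approach as the paper: both arguments observe that every tree is chordal, planar, and bipartite, so the NP-completeness for trees of maximum degree three established in Theorem~\ref{3tree} immediately transfers to each of these superclasses. The paper states this in a single sentence, while your write-up spells out NP-membership and the superclass principle explicitly, but the underlying reasoning is identical.
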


If in the proof of Theorem \ref{3tree}, we keep the graphs $Q_i$'s, $Q'_i$'s, and $T_i$'s disjoint, then we will have exactly the same argument to show a polynomial time reduction from the Distinct $3$-Partition problem to the Graph Burning problem. Thus, we have the following immediate corollary as well.

\begin{corollary}
The burning problem is \textbf{NP}-complete for forests of maximum degree three.
\end{corollary}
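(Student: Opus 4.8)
The plan is to reuse the gadget from the proof of Theorem~\ref{3tree} but to delete the edges that glue the pieces together: let $F(X)$ be the disjoint union of the paths $Q_1,\dots,Q_n$, the spiders $T_1,\dots,T_{m+1}$, and the paths $Q'_1,\dots,Q'_k$, so that $F(X)$ is a forest of maximum degree three with the same vertex set as the tree $T(X)$ built there, and is produced in polynomial time. It suffices to show that $b(F(X))\le 2m+1$ if and only if $X$ admits a Distinct $3$-Partition; both directions are near-literal copies of the corresponding parts of Theorem~\ref{3tree}, with exactly one bookkeeping step replaced.

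For the forward direction, assume a valid partition exists. I would set $x_i=r_i$, the centre of $T_i$, for $1\le i\le m+1$, so that $N_{2m+1-i}[x_i]=V(T_i)$, and then take $x_{m+2},\dots,x_{2m+1}$ to be the centres of the subpaths of orders $1,3,\dots,2m-1$ in a decomposition of $\big(\bigcup_j Q_j\big)\cup\big(\bigcup_l Q'_l\big)$ into pieces of orders in $O_m$ (the $Q_j$'s receiving the pieces of orders in $Y$, each $Q'_l$ being a single piece of order in $Z$); this decomposition exists precisely because the partition does and because this path-forest has order $\sum_{l\in O_m}l=m^2$. The balls $N_{2m+1-i}[x_i]$ then cover $V(F(X))$, so $b(F(X))\le 2m+1$ by Corollary~\ref{cov3}. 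No distance condition needs checking here, and across distinct components it holds automatically.

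For the converse, the fact that replaces ``$P$ is a subpath of order $(2m+1)^2$'' is the following. For each $i$ let $T'_i$ be the subpath of $T_i$ formed by two of its arms together with $r_i$ (order $2(2m+1-i)+1$), let $A_i=T_i-T'_i$ be the third arm, and let $H=\big(\bigcup_i T'_i\big)\cup\big(\bigcup_j Q_j\big)\cup\big(\bigcup_l Q'_l\big)$. Then $H$ is an isometric subforest of $F(X)$ which is a path-forest, and as a vertex set it is exactly the path $P$ of the proof of Theorem~\ref{3tree}, so $|V(H)|=(2m+1)^2$. Since a radius-$r$ ball in a path-forest has at most $2r+1$ vertices, any burning of $H$ needs at least $2m+1$ sources, so by Corollary~\ref{subforest} we get $b(F(X))\ge b(H)\ge 2m+1$; hence $b(F(X))\le 2m+1$ forces equality. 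Now fix an optimum sequence $(x_1,\dots,x_{2m+1})$. Because $F(X)\setminus H=\bigcup_i A_i$ and the only vertex of $H$ adjacent to $A_i$ is $r_i$, each ball $N_{2m+1-i}[x_i]$ meets $H$ in a subpath of order at most $2(2m+1-i)+1$, with strict inequality whenever $x_i\notin H$; summing and comparing with $|V(H)|=(2m+1)^2=\sum_i\big(2(2m+1-i)+1\big)$ forces every $x_i\in H$ and forces these intersections to tile $V(H)$ by subpaths of orders $O_{2m+1}$. A strong induction then gives $x_i=r_i$ for $1\le i\le m+1$: the leaf $w_i$ of $A_i$ can only be reached from a source inside $T_i$, hence inside $T'_i$, at distance at most $(2m+1-j)-(2m+1-i)=i-j$ from $r_i$ for the source index $j$ covering it; this forces $j=i$ (since for $j<i$ the inductive hypothesis puts $x_j=r_j$ in a different component), so $x_i=r_i$. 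Consequently the first $m+1$ balls cover $\bigcup_i T_i$, so the last $m$ sources lie in $\big(\bigcup_j Q_j\big)\cup\big(\bigcup_l Q'_l\big)$ — a path-forest of order $m^2$ — and must tile it by subpaths of orders $O_m$. Finally the switching argument from the proof of Theorem~\ref{3tree} moves each $Q'_l$ onto itself, so every $Q_j$ is tiled by pieces of orders in $Y$, which (using $B/4<a_i<B/2$) forces exactly three pieces per $Q_j$ and yields a Distinct $3$-Partition of $X$.

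I expect the main obstacle to be the ``every $x_i$ lies in the path part'' step. In the connected construction this was a one-line count against the single path $P$, but balls of $F(X)$ can contain many more than $2r+1$ vertices because the spider centres have degree three; so the count must be run against the path-forest $H$, which requires first verifying that $N_r[x]\cap H$ is a subpath of order at most $2r+1$ for every vertex $x$ — and strictly smaller when $x\in A_i$, where the ball reaches $H$ only through $r_i$ — together with the exact capacity identity $\sum_{i=1}^{2m+1}\big(2(2m+1-i)+1\big)=|V(H)|$. Once this is established, the remaining steps are identical to those in Theorem~\ref{3tree}, and the disconnectedness only simplifies matters by making all cross-component distance conditions vacuous.
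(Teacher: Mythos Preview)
Your proposal is correct and follows exactly the approach the paper sketches: keep the pieces $Q_i$, $Q'_i$, $T_i$ disjoint and rerun the argument of Theorem~\ref{3tree}. You have also correctly filled in the one detail the paper elides, namely replacing the single path $P$ by the isometric path-forest $H$ of order $(2m+1)^2$ to obtain the lower bound $b(F(X))\ge 2m+1$ via the ball-size count, after which the remainder is verbatim.
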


A \emph{binary tree} is a rooted tree in which every node has at most two children. 
In a \emph{full binary tree}, every node that is not a leaf has exactly two children. 
A \emph{perfect binary} tree is a full binary tree in which all the leaves have the same depth; that is, they are all of the same distance from the root.

Let $T$ be a perfect binary tree of radius $r$. Assume that $s$ is the root of $T$, and $u$ and $v$ are the neighbours of $s$. By definition, we can see that, there is only one node of degree two that is the root $s$, and every other node except for the leaves is of degree three.  Moreover, $s$ is the unique centre node of $T$, and each of $u$ and $v$ by itself is the root of a perfect binary tree of radius $r-1$. Also, note that every perfect binary tree of radius $r$ has $2^r$ leaves.
See Figure \ref{bin} for an example of a perfect binary tree.
\begin{figure}
  \begin{center}
  \includegraphics[width=.25\linewidth]{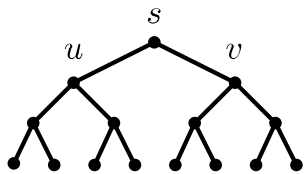}
  \caption{A perfect binary tree with root $s$.}\label{bin}
  \end{center}
\end{figure}

Note that in the proof of Theorem \ref{3tree}, the gadget graph that we construct is a binary tree.
To see this, we can take the end point of path $Q$ that is a leaf as a root for $T(X)$ and then, every other node is a descendant of the root and has at most two children.
Therefore, as another corollary of Theorem \ref{3tree}, we can say that the burning problem is \textbf{NP}-complete even for binary trees.
However, we can find the burning number of the perfect binary trees as stated below.

\begin{theorem}\label{perfbin}
If $T$ is a perfect binary tree of radius $r$, then $b(T) = r+1$.
\end{theorem}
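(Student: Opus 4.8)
The plan is to establish the two inequalities $b(T) \le r+1$ and $b(T) \ge r+1$ separately. The upper bound is immediate from the earlier material: since the root $s$ is the unique centre of $T$ and $T$ has radius $r$, burning $s$ at the first step and letting the fire spread for $r$ more rounds covers all of $V(T)$, so $b(T) \le r+1$ by Lemma~11 from \cite{bonato2} (quoted in the preliminaries). The real content is the lower bound $b(T) \ge r+1$, equivalently showing that no burning sequence of length $r$ exists. For this I would assume toward a contradiction that $(x_1, x_2, \ldots, x_r)$ is a burning sequence, so that $\bigcup_{i=1}^{r} N_{r-i}[x_i] = V(T)$, and derive a contradiction by a counting argument on the leaves.

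The key observation is that $T$ has $2^r$ leaves, all at depth exactly $r$ from the root, and more importantly the pairwise distances among leaves are large: two leaves in different subtrees rooted at $u$ and $v$ are at distance $2r$, and in general two leaves whose deepest common ancestor is at depth $d$ from the root are at distance $2(r-d)$. I would count, for each source $x_i$, how many leaves can lie in $N_{r-i}[x_i]$. A ball of radius $r-i$ around any vertex $x_i$ can reach leaves only through a bounded-depth ``cone''; concretely, if $x_i$ is at depth $d_i$, then the leaves within distance $r-i$ of $x_i$ all descend from a single ancestor of $x_i$ (or from $x_i$ itself) at depth at least $d_i - (r-i)$, and since a node at depth $\delta$ has exactly $2^{r-\delta}$ leaf-descendants, the number of leaves in $N_{r-i}[x_i]$ is at most $2^{r-i}$ when $x_i$ is suitably deep, and one checks the extremal case ($x_i = s$, $i=1$) gives at most $2^{r-1}$ as well. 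Summing, $\sum_{i=1}^{r} 2^{r-i} = 2^r - 1 < 2^r$, so the $r$ balls cannot cover all leaves, a contradiction. An alternative, cleaner route is to invoke Theorem~\ref{path} together with Corollary~\ref{subforest}: a perfect binary tree of radius $r$ contains an isometric path on $2r+1$ vertices (a root-to-root geodesic through $s$), giving $b(T) \ge b(P_{2r+1}) = \lceil (2r+1)^{1/2}\rceil$, but this is far weaker than $r+1$, so the leaf-counting argument is the one that must be pushed through.

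The step I expect to be the main obstacle is making the leaf-counting bound ``$|N_{r-i}[x_i] \cap \text{leaves}| \le 2^{r-i}$'' fully rigorous across all positions of $x_i$ — in particular handling sources placed high in the tree (small depth) where the ball can spill over into both of the two top subtrees, and confirming that even then the bound $2^{r-i}$ (rather than something larger) holds because such a source must be close to $s$, forcing $i$ small and $r-i$ large but still bounded by the geometry. One must argue carefully that the ``double cone'' through an ancestor never beats the single-subtree count, which comes down to the inequality $2^{a} + 2^{b} \le 2^{r-i}$ for the relevant exponents $a,b$ determined by how far $x_i$ sits from the branch point; this is where the precise relation between depth, radius, and the index $i$ enters. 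Once that lemma is in hand, the summation $\sum_{i=1}^r 2^{r-i} = 2^r-1$ closes the argument and, combined with the upper bound, yields $b(T) = r+1$.
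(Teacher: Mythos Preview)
Your proposal is correct and rests on the same leaf-counting idea the paper uses: bound the number of leaves each ball $N_{r-i}[x_i]$ can contain by $2^{r-i}$, sum, and fall one short of the total. The paper organizes it slightly differently --- it sets up an induction on $r$, then argues (without loss of generality) that $x_1\notin T_1$ and counts only the $2^{r-1}$ leaves of the subtree $T_1$ against sources $x_2,\ldots,x_r$, obtaining $\sum_{i=2}^{r}2^{r-i}=2^{r-1}-1$ --- whereas you count all $2^r$ leaves at once and dispense with both the induction and the subtree restriction. Your packaging is a bit more streamlined; the paper's buys nothing extra.

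Regarding the obstacle you flag: the bound $|N_{r-i}[x_i]\cap L|\le 2^{r-i}$ (with $L$ the leaf set) holds uniformly, and the ``double-cone'' worry does not materialize. If $x_i$ has depth $d$, a leaf whose lowest common ancestor with $x_i$ lies at depth $a\le d$ is at distance $d+r-2a$; summing over admissible $a$ gives a leaf count of $0$ when $d<i$ and exactly $2^{\,r-\lceil (d+i)/2\rceil}$ when $d\ge i$. This is maximized at $d=i$ with value $2^{r-i}$, so placing $x_i$ high in the tree only hurts. The paper states this bound with the same one-line justification (a ball of radius $r-i$ meets the leaves in at most the leaf set of a perfect binary subtree of radius $r-i$), so your level of rigor already matches theirs.
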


\begin{proof}
We prove the theorem statement by induction on $r$ as follows. First, by definition, a perfect binary tree $T$ of radius $1$ consists of only a root node and its two neighbours. Thus, $T$ is a path on three nodes, and hence, by Theorem \ref{b222}, $b(T) =2$. 

Now, assume that the burning number of a perfect binary tree of radius $r-1$ equals $r$, and $T$ is a perfect binary tree of radius $r$. 
Since the root of a perfect binary tree is its centre, then, by burning the root at the first step, every other node will be burned after $r$ more steps. 
Let $s$ be the root of $T$ with neighbours $u$ and $v$. As we mentioned right after the definition of a perfect binary tree, $u$ and $v$ are the roots of two perfect binary trees of radii $r-1$ that we call them $T_1$ and $T_2$, respectively.
Since $T$ includes a perfect binary tree of radius $r-1$ as its isomorphic subtree, by Corollary \ref{subforest}, we conclude that $r\leq b(T)\leq r+1$. 

By contradiction, suppose that $b(T) = r$. Clearly, $V(T)$ is the disjoint union of  $V(T_1)$,  $V(T_2)$, and  $\{s\}$. Hence, for burning $T$ we have to burn $T_1$ and $T_2$ as well. Without loss of generality, assume that in an optimum burning sequence for $T$ like $(x_1, x_2, \ldots, x_r)$, we have that $x_1 \not\in T_1$. 
We know that $T_1$ has $r$ levels (including $u$) and the only connection of $T_1$ to the rest of the nodes in $T$ is through the node $s$. Hence, the distance between every leaf in $T_1$ and any node in $T\setminus T_1$ is at least $r$. Since $T_1$ has $2^{r-1}$ leaves, and every leaf in $T_1$ must be burned by the end of the $r$-th step, then, we have to choose at least one source of fire from $T_1$. 

Now, let $L$ denote the set of the leaves in $T_1$. Since the number of the leaves in a perfect binary tree of radius $r-i$ equals $2^{r-i}$, then we conclude that for $2\leq i\leq r$, $|N_{r-i}[x_i] \cap L| \leq 2^{r-i}$ (where the equality holds only if we choose $x_i$ from the $(i-1)$-th level of $T_1$).
Hence, by following this greedy argument, even in the case that we choose all the $x_i$'s, for $2\leq i\leq r$, from $T_1$, we will burn at most $2^{r-2} + 2^{r-3} + \cdots + 1 = 2^{r-1} - 1$ leaves in $L$, which is a contradiction. By symmetry of $T$, we have a similar argument for not choosing $x_1$ from $T_2$. Hence, we conclude that burning $T$ in $r$ steps is impossible. Therefore, $b(T) = r+1$, and the proof follows by induction. 
\end{proof}

\section{Burning Spider Graphs and Path-Forests}

In this section, we prove that the Graph Burning problem is \textbf{NP}-complete even for spider graphs and path-forests.
We first provide some background on the burning number of trees.

A \emph{terminal path} in a tree $T$ is a path $P$ in $T$ such that one of the end points of $P$ is a leaf of $T$. The other end point of $P$, that is not necessary a leaf, is called the \emph{non-terminal} end point of $P$ (if $P$ is of order one, then the non-terminal end point of $P$ and the leaf in $P$ coincide). Assume that $\{Q_i\}_{i=1}^t$ is a set of disjoint terminal paths  in $T$, and let $v_i$ denote the non-terminal point of the path $Q_i$, for $1\leq i\leq t$. We call $\{Q_i\}_{i=1}^t$ a \emph{decomposed spider} in $T$ if the path between every pair $v_i$ and $v_j$ does not contain any node of $Q_i$ and $Q_j$ except $v_i$ and $v_j$. 

\begin{theorem}\label{lowkt}
Suppose that $\{Q_i\}_{i=1}^t$, where $t\geq 3$, forms a decomposed spider in a tree $T$, and let $v_i$ be the non-terminal end point of $Q_i$, for $1\leq i\leq t$. If $d(v_i , v_j)\geq 2k$ for all $1\leq i,j\leq t$, and $t\geq k$, then $b(T) \geq k+1$.
\end{theorem}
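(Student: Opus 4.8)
The plan is a proof by contradiction. Suppose $b(T)\le k$. Padding an optimum burning sequence with arbitrary vertices — which is harmless since enlarging the radii can only enlarge the neighbourhoods, so equation~(\ref{eqq}) still gives a covering — we obtain a sequence $(x_1,\ldots,x_k)$ of vertices of $T$ with $\bigcup_{\ell=1}^{k} N_{k-\ell}[x_\ell]=V(T)$. For each $i$ let $u_i$ be the leaf endpoint of $Q_i$; call these vertices the \emph{tips}. The first thing I would record is that the tips are pairwise far apart: by the defining property of a decomposed spider, the geodesic from $u_i$ to $u_j$ runs inside $Q_i$ to $v_i$, then along the $v_i$--$v_j$ path, then inside $Q_j$ to $u_j$, so $d(u_i,u_j)\ge d(v_i,v_j)\ge 2k$.

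Next I would observe that every ball $N_{k-\ell}[x_\ell]$ has radius at most $k-1$, hence contains at most one tip: two tips lie at distance at least $2k>2(k-\ell)$. Since all $t$ tips must be covered by the $k$ balls, this forces $t\le k$; together with the hypothesis $t\ge k$ we get $t=k$, and the correspondence ``ball $\mapsto$ its tip'' is a bijection, so each ball $N_{k-\ell}[x_\ell]$ contains \emph{exactly} one tip. (If instead $t\ge k+1$, this counting step already yields a contradiction, so the statement holds a fortiori in that range.)

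The crux — and the point from which the extra ``$+1$'' must come, since the counting above by itself only yields $b(T)\ge k$ — is the last ball. Because $N_0[x_k]=\{x_k\}$ must contain a tip, $x_k$ itself is a tip, say $x_k=u_i$. Its unique neighbour $p$ in $T$ then lies on every geodesic issuing from the leaf $u_i$, so $d(p,u_j)=d(u_i,u_j)-1\ge 2k-1$ for every $j\ne i$. Now $p$ must be covered by some ball $N_{k-\ell}[x_\ell]$ with $\ell\le k-1$, and by the bijection of the previous step the unique tip in that ball is some $u_j$ with $j\ne i$ (as $u_i$ already occupies the ball indexed $k$). But then $p$ and $u_j$ both lie in a ball of radius $k-\ell\le k-1$, forcing $d(p,u_j)\le 2(k-1)=2k-2$, contradicting $d(p,u_j)\ge 2k-1$. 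Hence $b(T)\ge k+1$.

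The only step that needs genuine care is this last one: the naive ``each fire source can finish off at most one tip'' bound saturates exactly at $b(T)\ge k$, and the gain comes from noticing that the radius-zero source is forced to sit on a tip and that the vertex adjacent to that tip cannot be absorbed by any other source without violating the pairwise distance bound among tips. The remaining ingredients — the distance identity for tips via the decomposed-spider condition, the at-most-one-tip-per-ball observation, and the pigeonhole argument — are routine and I would not belabour them.
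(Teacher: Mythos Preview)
Your proof is correct and follows the same overall strategy as the paper's: assume $b(T)\le k$, observe that each ball $N_{k-\ell}[x_\ell]$ can contain at most one tip, use pigeonhole to force $t=k$ and a bijection between balls and tips, and then derive a contradiction from the radius-zero ball $\{x_k\}$.

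The one noteworthy difference is in the endgame. The paper first passes to the minimal subtree $T'$ containing $\bigcup_i Q_i$, identifies $x_k$ with a tip $w_i$, shows that every other source $x_j$ satisfies $d(v_i,x_j)>k$, and then exhibits an unburned vertex $s$ chosen as a neighbour of the \emph{non-terminal} endpoint $v_i$ off the $v_i$--$w_i$ path (invoking $t\ge 3$ to guarantee such an $s$ exists in $T'$). You instead stay in $T$ and take $p$ to be the unique neighbour of the \emph{leaf} $u_i$; since the bijection forces the ball covering $p$ to contain a different tip $u_j$, the diameter bound $d(p,u_j)\le 2(k-1)$ clashes with $d(p,u_j)=d(u_i,u_j)-1\ge 2k-1$. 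Your variant is slightly cleaner: it avoids the reduction to $T'$ and does not explicitly need $t\ge 3$ at that step, while the paper's choice of $s$ near $v_i$ is what makes the hypothesis $t\ge 3$ visibly enter its argument.
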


\begin{proof}
Let $T'$ be the smallest connected subgraph of $T$ that contains $\bigcup_{i=1}^t Q_i$.
Since $T'$ is an isometric subtree of $T$, to prove that $b(T)\geq k+1$ it suffices to show that $b(T') \geq k+1$, as follows.

First, we show that the burning number of $T'$ is at least $k$. Let $w_i$ denote the leaf of $T'$ in $Q_i$.
Note that we may have $w_i = v_i$ (in the case that $Q_i$ is of order one).
We claim that there is no fire source $x_j$ that spreads the fire to two distinct leaves. By contradiction, suppose that there are two distinct leaves $w_i$ and $w_r$, and a fire source $x_j$ for which we have that $d(x_j , w_i) \leq k-j$ and $d(x_j , w_r) \leq k-j$ (that is, $w_i$ and $w_r$ both receive the fire started at $x_j$).
By triangle inequality, we conclude that 
$$2k\leq d(v_i , v_r) \leq d(w_i ,w_r) \leq d(w_i , x_j) + d(x_j, w_r) \leq 2k-2j < 2k,$$ which is a contradiction.
Therefore, it implies that corresponding to every leaf $w_i$ there is a unique fire source $x_{j}$ such that the fire spread from $x_{j}$ only burns one leaf of $T$, that is $w_i$.
Thus, the number of fire sources must be at least as large as the number of the leaves in $T'$ that is $t\geq k$.
Hence, we must have $b(T') \geq k$.

Now, we claim that $b(T') \neq k$. 
By contradiction, suppose that $b(T') = k$, and $(x_1, x_2, \ldots, x_k)$ is an optimum burning sequence for $T'$.
If $t > k$, then the above argument leads to the same contradiction, as the number of the fire sources has to be as large as the number of the leaves. If $t = k$, then let $w_i$ be the leaf that receives the fire from $x_k$. Since $b(T') = k$, then it implies that $x_k = w_i$.
We claim that there is no fire source $x_j \neq x_k$ with $d(v_i , x_j) \leq k$. By contradiction, suppose that there is a fire source $x_j \neq x_k$ with $d(v_i , x_j) \leq k$, and let $w_{r}$ be the leaf of $T'$ that receives the fire spread from $x_j$. Thus, we have that
$$2k \leq d(v_{r} , v_i) \leq d(w_{r} , v_i) \leq  d( w_{r}, x_j) + d( x_j, v_i ) \leq k-j + k < 2k,$$
which is a contradiction.

Let $s$ be a neighbour of $v_i$ that is not in the path between $v_i$ and $w_i = x_k$. Since $t\geq 3$,  we are sure that such a node $s$ does exist. By assumption, we know that $x_k = w_i$, and therefore $s$ can not receive the fire spread from $x_k$. On the other hand, the distance between $s$ and any other fire source must be at least $k$. Thus, $s$ can not be burned by the end of the $k$-th step, which is a contradiction.
Hence, we have that $b(T') \geq k+1$.
\end{proof}

Assume that we want to find the burning number and an optimum burning sequence for a given tree $T$. If there is an optimum burning sequence $(x_1, x_2, \ldots, x_k)$  for $T$ such that $V(T) \subseteq N_{k-1}[x_1]$, then we can see that $b(T) = \mathrm{radius}(T) +1$. If $T \setminus N_{k-1}[x_1]$ is non-empty, then it implies that $$T\setminus N_{k-1}[x_1] \subseteq N_{k-2}[x_2] \cup N_{k-3}[x_3] \cup \ldots \cup N_0[x_k].$$ 
This observation suggests the following conjecture.

\begin{conjecture}\label{lowk}
Suppose that $\{Q_i\}_{i=1}^t$, where $t\geq 3$, forms a decomposed spider in a tree $T$, and let $v_i$ be the non-terminal end point of $Q_i$, for $1\leq i\leq t$. If $b(\cup_{i=1}^t Q_i) \geq k$, and $d(v_i , v_j)\geq 2k$ for all $1\leq i,j\leq t$, then $b(T) \geq k+1$.
\end{conjecture}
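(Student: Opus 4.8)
The plan is to mimic the structure of the proof of Theorem~\ref{lowkt}, but to replace the crude ``$t \ge k$ leaves force $t \ge k$ fire sources'' counting with the hypothesis $b(\cup_{i=1}^t Q_i) \ge k$. Let $T'$ be the smallest connected subtree of $T$ containing $\bigcup_{i=1}^t Q_i$; since $T'$ is isometric in $T$, by Corollary~\ref{subforest} it suffices to prove $b(T') \ge k+1$. First I would argue by contradiction: suppose $(x_1, x_2, \ldots, x_k)$ is an optimum burning sequence for $T'$ (if $b(T') \le k-1$ we are trivially done, and if $b(T')=k$ we derive a contradiction; note $b(T') \ge b(\cup Q_i) \ge k$ already by Corollary~\ref{subforest}, so in fact $b(T')=k$ is the only case to rule out). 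The key structural claim, exactly as in Theorem~\ref{lowkt}, is that no single fire source $x_j$ can burn two distinct non-terminal endpoints $v_i, v_r$: if $d(x_j, v_i) \le k-j$ and $d(x_j, v_r) \le k-j$, then the decomposed-spider condition (the $v_i$--$v_r$ path avoids the interiors of $Q_i$ and $Q_j$) combined with $d(v_i,v_r) \ge 2k$ gives $2k \le d(v_i,v_r) \le d(v_i,x_j)+d(x_j,v_r) \le 2k-2j < 2k$, a contradiction. So the $t$ points $v_1, \ldots, v_t$ are burned by $t$ pairwise-distinct fire sources.

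Next I would reorganize the burning: relabel so that $x_{\sigma(i)}$ is the (unique) source whose fire reaches $v_i$, and let $Q_i$ have order $q_i$; the leaf $w_i$ of $Q_i$ lies at distance $q_i - 1$ beyond $v_i$. The idea is that the portion of the $i$-th fire ball that lands inside $Q_i$, namely $N_{k - \sigma(i)}[x_{\sigma(i)}] \cap Q_i$, is a subpath of $Q_i$ of length at most $k - \sigma(i) + 1$ anchored so as to be able to reach $w_i$, and that the pieces of the fire balls landing in the ``connecting'' part of $T'$ (the edges between the $v_i$'s) must cover a tree of a certain minimum total size. From the hypothesis $b(\cup_{i=1}^t Q_i) \ge k$ I would extract, via Theorem~\ref{path-forest} applied in the contrapositive or directly via the set equation~(\ref{eqq}), the combinatorial fact that the multiset of ``radii'' $\{k - \sigma(i)\}_{i=1}^t$ assigned to the arms $Q_i$ is \emph{not} enough to burn the path-forest $\cup_i Q_i$ on its own --- that is, $\bigcup_i N_{k-\sigma(i)}[y_i] \ne \bigcup_i V(Q_i)$ for any choice of centers $y_i \in Q_i$. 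Hence at least one arm $Q_i$ has an unburned node that is not reachable within its allotted radius from inside $Q_i$; such a node must instead be reached by its designated source $x_{\sigma(i)}$ sitting \emph{outside} $Q_i$, which costs extra distance and, because $d(v_i,v_j) \ge 2k$ for the neighbours $v_j$, cannot be compensated. Finishing as in Theorem~\ref{lowkt}: pick a neighbour $s$ of the relevant $v_i$ off the $v_i$--$w_i$ path (exists since $t \ge 3$, so $v_i$ has degree $\ge 3$ in $T'$), show $s$ is too far from every fire source, and conclude $b(T') \ge k+1$.

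The main obstacle I anticipate is making precise the ``accounting'' step: in Theorem~\ref{lowkt} the arms have a trivial structure (one leaf each) so a greedy count suffices, but here the arms $Q_i$ are arbitrary-length paths and the burning number of their union is governed by the square-root-plus-components bound of Theorems~\ref{path} and~\ref{path-forest}. I would need to show that if every fire source were placed \emph{inside} its arm (the most efficient placement), then $b(\cup Q_i) < k$ would follow --- contradicting the hypothesis --- and conversely that moving any source outside its arm, given the $2k$-separation of the $v_i$'s, strictly loses ground. Quantifying this trade-off (outside-placement loses at least as much radius in the arm as it could possibly gain) is the delicate part, and I expect it to require a careful case analysis on whether the ``misplaced'' source lies in the connecting region or in another arm $Q_j$, using $d(v_i, v_j) \ge 2k$ in each case much as the triangle-inequality chains in the two displayed inequalities of Theorem~\ref{lowkt}'s proof. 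If that trade-off can be pinned down cleanly, the rest of the argument is a direct transcription of the $t=k$ endgame already carried out for Theorem~\ref{lowkt}.
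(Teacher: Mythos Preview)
The statement you are attempting to prove is labeled a \emph{conjecture} in the paper; the authors do not prove it and explicitly leave it open, remarking only that the special case $t \ge k$ is handled by Theorem~\ref{lowkt}. There is therefore no proof in the paper to compare against---your proposal is an attack on an open problem.

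Your reduction to $T'$ and the observation that each $v_i$ is burned by a distinct source $x_{\sigma(i)}$ are correct and follow Theorem~\ref{lowkt} closely. The gap you yourself flag---the ``accounting step''---is real and is not resolved by your sketch. Concretely, the extraction you propose does not follow from the hypothesis: $b(\cup_i Q_i) \ge k$ says only that balls of radii $k-2, k-3, \ldots, 0$ cannot cover $\cup_i Q_i$; it does \emph{not} say that the particular $t$-element subset $\{k-\sigma(i)\}$ of $\{0,1,\ldots,k-1\}$ fails to cover when one ball is placed in each arm. In particular, once the radius-$(k-1)$ ball is among the designated radii (that is, $x_1$ burns some $v_i$), covering $\cup_i Q_i$ with those $t$ balls would only yield $b(\cup_i Q_i) \le k$, which is consistent with the hypothesis and gives no contradiction. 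Moreover, your bookkeeping tracks only the $t$ designated sources and ignores the remaining $k-t$, any of which may lie inside an arm and contribute to covering $\cup_i Q_i$; conversely, designated sources sitting inside their arms may simultaneously reach into the connecting region. The ``trade-off'' you hope to quantify---that pushing a source toward the connecting region costs at least as much arm-coverage as it gains---is precisely the crux of the conjecture, and the triangle-inequality manipulations from Theorem~\ref{lowkt} are not sharp enough to establish it when the $Q_i$ have arbitrary lengths. Your plan correctly isolates the obstacle but does not overcome it.
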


Conjecture \ref{lowk} may be helpful in finding a lower bound on the burning number of a tree $T$ (as we can see the truth of it for paths by Theorem \ref{path} \cite{bonato2}, and also we will see the truth of it later on for some specific spider graphs). In particular, if the burning number of a tree $T$ is strictly less than $\mathrm{radius}(T) +1$, and the conjecture was true, then by starting from the leaves of $T$ we could find a good lower bound on $b(T)$. 
Note that by Theorem \ref{lowkt}, when $t\geq k$, the above conjecture is true. Also, we can prove the following lemma, since the leaves in any spider graph $SP(s,r)$, with $s\geq r$, form a decomposed spider that satisfies the conditions in Theorem \ref{lowkt}. 

\begin{lemma}\label{sk}
For a spider graph $SP(s,r)$, with $s\geq r$, we have that $b(SP(s,r))= r+1$. Moreover, for $s\geq r+2$, every optimal burning sequence of $SP(s,r)$ must start by burning the central node.
\end{lemma}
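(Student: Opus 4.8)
The plan is to establish the two claims in Lemma~\ref{sk} separately, both leaning on Theorem~\ref{lowkt} for the lower bound and on an explicit burning sequence for the upper bound.

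First I would handle the value $b(SP(s,r)) = r+1$. Since $SP(s,r)$ has radius $r$ (the centre has eccentricity $r$, every leaf has eccentricity $2r$), the bound $b(SP(s,r)) \le r+1$ is immediate from the general fact that $b(G) \le \mathrm{radius}(G)+1$, realized by burning the central node first. For the matching lower bound $b(SP(s,r)) \ge r+1$, observe that the $s$ leaves of $SP(s,r)$, taken as trivial terminal paths (each of order one, with the leaf itself as non-terminal end point), form a decomposed spider in the sense defined before Theorem~\ref{lowkt}: the path between any two leaves passes through the spider head and contains no other leaf. Any two leaves are at distance exactly $2r$, so taking $k = r$ we have $d(v_i,v_j) = 2r \ge 2k$ and $t = s \ge r = k$, and $t \ge 3$ holds since $s \ge 3$ by definition of a spider. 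Theorem~\ref{lowkt} then gives $b(SP(s,r)) \ge r+1$, so equality holds. (The hypothesis $s \ge r$ is exactly what makes $t \ge k$.)

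Next I would prove the second claim: for $s \ge r+2$, every optimum burning sequence starts with the central node. Suppose $(x_1,\dots,x_{r+1})$ is an optimum burning sequence and, for contradiction, that $x_1$ is not the spider head $c$. The key counting observation is the same one used in the proof of Theorem~\ref{lowkt}: no single fire source can burn two distinct leaves, since two leaves are at distance $2r$ while a source $x_j$ reaches only nodes within distance $r+1-j \le r$, and two such balls around $x_j$ would force $2r \le 2(r+1-j) < 2r$ for $j \ge 1$. Hence each of the $s$ leaves needs its own dedicated fire source among $x_1,\dots,x_{r+1}$, and since $s \ge r+2 > r+1$ there are not enough sources — unless one source, sitting at or very near $c$, can cover several leaves. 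More precisely, $x_1$ is the only source whose ball $N_r[x_1]$ can contain more than one leaf, and $N_r[x_1]$ contains at least two leaves only if $x_1 = c$ (any node at distance $d \ge 1$ from $c$ along an arm has within distance $r$ only the leaves of arms it can reach, and reaching two distinct leaves would require $2r \le 2r - 2d$). So if $x_1 \ne c$, then $x_1$ burns at most one leaf, and each of $x_2,\dots,x_{r+1}$ burns at most one leaf, covering at most $r+1 < s$ leaves — a contradiction. Therefore $x_1 = c$.

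The main obstacle is the careful case analysis in the last step: I need to verify precisely that a node $x_1 \ne c$ lying on some arm at distance $d \ge 1$ from the head has at most one leaf within distance $r$ of it (it reaches its own arm's leaf only if $d + (r-d) = r \le r$, always, but reaching another arm's leaf needs distance $d + r \le r$, impossible for $d \ge 1$), and likewise that the remaining $r$ sources $x_2,\dots,x_{r+1}$ contribute at most one leaf each regardless of position — this is where the inequality $d(v_i,v_j) = 2r \ge 2(r+1-j)$ for $j \ge 1$ is used, and one must be slightly careful that $j$ ranges over $2,\dots,r+1$ so the slack is strict. Once the leaf-counting bookkeeping is pinned down, the contradiction with $s \ge r+2$ is immediate, and the hypothesis $s \ge r+2$ (rather than $s \ge r+1$) is seen to be necessary, since when $s = r+1$ one could in principle dedicate all $r+1$ sources one-per-leaf without using the head.
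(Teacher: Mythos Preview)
Your proposal is correct and follows essentially the same approach as the paper. For the first claim you invoke Theorem~\ref{lowkt} on the leaves just as the paper does (you apply it directly to $SP(s,r)$ whereas the paper first passes to the isometric subgraph $SP(r,r)$, but this is cosmetic); for the second claim your leaf-counting argument---each source burns at most one leaf when $x_1\ne c$, so $r+1$ sources cannot cover $s\ge r+2$ leaves---is the same pigeonhole the paper runs, phrased in terms of leaves rather than arms.
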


\begin{proof}
We know that $b(SP(s,r))\leq r+1$, as $SP(s,r)$ has radius $r$. Since $SP(r,r)$ is an isometric subgraph of $SP(s,r)$ where $s\geq r$, then it suffices to show that $b(SP(r,r)) = r+1$.

First, we prove that $b(SP(r,r)) \geq r+1$. 
We index the leaves of $SP(r,r)$ with $w_1, w_2, \ldots, w_r$.
For $1\leq i\leq r$, let $Q_i$ be the graph induced by $w_i$; that is, $Q_i$ is a path of order one.
Hence, every $Q_i$ is a terminal path in $SP(r,r)$ with the non-terminal end $w_i$, and for every distinct pair $1\leq i,j\leq r$, we have that $d(w_i ,w_j) = 2r$.  Therefore, by Theorem \ref{lowkt}, we conclude that $b(SP(r,r)) \geq r+1$. Hence, the proof follows.

Now, suppose that $s\geq r+2$ and there exists an optimal burning sequence $(x_1, x_2, \ldots, x_{r+1})$ for $SP(s,r)$ in which $x_1$ is not the central node. Since $s\geq r+2$ and $b(SP(s,r)) = r+1$, then by Pigeonhole Principle, one of the arms does not include any source of fire, unless we choose the central node as a fire source. Note that by assumption, $x_1$ is not the central node. Since the only connection between the nodes in that arm to the rest of the nodes in $SP(s,r)$ goes through the central node, then in both cases, we need at least $1 + (r+1)$ steps for burning the leaf on that arm, which is a contradiction. Thus, every optimal burning sequence of $SP(s,r)$ starts by burning the central node where $s\geq r+2$.
\end{proof}

Using the above lemma, we now prove that the burning problem is \textbf{NP}-complete even for spider graphs.
\begin{theorem}\label{trees}
The burning problem is \textbf{NP}-complete for spider graphs.
\end{theorem}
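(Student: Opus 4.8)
The plan is to reduce once again from the Distinct $3$-Partition problem, mimicking the construction of Theorem~\ref{3tree} but replacing the "path-with-attached-arms" gadget $T(X)$ by a genuine spider graph. Given an instance $X=\{a_1,\dots,a_{3n}\}$ and $B$ with $B/4<a_i<B/2$ and $\sum a_i=nB$, I would again set $Y=\{2a_i-1:a_i\in X\}\subseteq O_m$ and $Z=O_m\setminus Y$, where $m=\max X$. The idea is to build a spider $S(X)$ whose arms fall into two classes: (i) a large collection of "long" arms whose lengths are forced, by Lemma~\ref{sk}, to be burned by fire sources placed essentially at prescribed positions; and (ii) a block of arms of total length $n(2B+3)+\sum_{\ell\in Z}\ell$ that must be covered by the remaining $n$ fire sources, each of which can reach a subpath of bounded odd order. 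As in Theorem~\ref{3tree}, I would aim for the spider to contain an isometric subpath (or, more naturally here, a large enough sub-spider $SP(s,r)$ with $s\ge r$) so that $b(S(X))\ge k$ follows from Lemma~\ref{sk} or Theorem~\ref{lowkt}, and $b(S(X))\le k$ holds exactly when the $3$-partition exists.

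Concretely, I expect the target value to be $k=2m+1$ (or some similar explicit linear-in-$m$ quantity), and the construction to go as follows. Take arms of lengths $2m+1-i$ for an appropriate range of $i$ playing the role of the spiders $T_i$ in the earlier proof, ensuring there are at least $k$ of them so that Theorem~\ref{lowkt} (with the leaves as a decomposed spider, provided the pairwise distances between the relevant non-terminal points are at least $2k$) gives $b(S(X))\ge k+1 = 2m+2$, or adjust constants so the forced lower bound matches. Then one additional block of $n$ arms, each of length $2B+3$, together with short "filler" arms of lengths in $Z$, must be coverable by subpaths of orders exactly the odd numbers in $O_m$; since the center is burned first (forced when the number of arms is $\ge r+2$, by Lemma~\ref{sk}), the fire that reaches a given arm beyond the center-neighbourhood behaves like burning a path, so Theorem~\ref{path} controls which subpath orders are achievable. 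The equivalence "$3$-partition exists $\iff$ the long arms decompose into subpaths of orders in $Y\iff b(S(X))\le k$" is then argued exactly as in Theorem~\ref{3tree}, including the switching/rearrangement step that moves the forced $Z$-length pieces onto the filler arms.

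The main obstacle, and the place where real care is needed, is geometric: in a spider all arms emanate from a single center, so a single early fire source near the center can bleed into many arms at once, unlike the path gadget of Theorem~\ref{3tree} where the fire from $x_i$ was confined to an interval. I therefore need to force the first fire source to be the center (using Lemma~\ref{sk}, which requires enough arms, $s\ge r+2$), and then force the remaining sources to lie deep inside individual arms far from the center, so that each behaves like a path-burning source with a prescribed odd "reach" $2m+1-i$. Making the arm lengths and the value of $k$ consistent — so that the $m+1$ "type-(i)" sources exactly exhaust the $T_i$-type arms, the center contributes its radius-$r$ neighbourhood, and the leftover budget is precisely $n$ sources with reaches summing correctly to cover $n(2B+3)$ plus the $Z$-fillers — is the delicate bookkeeping. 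Once the numerology is pinned down, the forward direction (partition $\Rightarrow$ burning sequence) is a direct construction via equation~(\ref{eqq}), and the reverse direction (burning sequence $\Rightarrow$ partition) follows the same counting argument as before: the total length that $2m+1$ sources of reaches $2m,2m-1,\dots,0$ can cover equals $|V(S(X))|$ with no slack, so every inequality must be tight, which pins down both the center-burning and the arm decomposition into $O_m$-pieces, and then the switching argument isolates $Y$ on the $Q_i$ arms, i.e. recovers the $3$-partition.

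Finally, since $B$ is polynomially bounded (Distinct $3$-Partition is strongly \textbf{NP}-complete) and hence $m$ and the total order of $S(X)$ are polynomial in the input, the reduction is computable in polynomial time; membership in \textbf{NP} is immediate as in Theorem~\ref{3tree}. This completes the plan; the NP-completeness for path-forests promised in the section introduction should then follow, as with the forest corollary to Theorem~\ref{3tree}, by cutting the spider at its center into its constituent arms and re-running the same counting argument on the resulting collection of paths.
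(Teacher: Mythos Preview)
Your high-level strategy matches the paper's: reduce from Distinct $3$-Partition, build a spider whose burning number equals a target $k$ iff the partition exists, force the first fire source to be the spider head via Lemma~\ref{sk}, and then run a tight counting argument on the residual path-forest together with the switching trick from Theorem~\ref{3tree}. Where you diverge is in the construction, and there you are overcomplicating things in a way that does not actually work.

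The paper does \emph{not} try to replicate the $T_i$ gadgets of Theorem~\ref{3tree}. In a spider there is no analogue of the three-branch forcing that pinned $x_i=r_i$ there; an arm is just a path, so your ``type-(i)'' arms of lengths $2m+1-i$ would simply be swallowed by the center's $(k-1)$-neighbourhood and would not force any later source to a prescribed position. Instead the paper sets $m+1=\max X$, takes as base the \emph{uniform} spider $G_s=SP(2m+5,m+1)$ with head $s$, and attaches to distinct leaves of $G_s$ one copy of $P_\ell$ for each $\ell\in O_{m+1}\setminus Y$ together with $n$ copies $Q_1,\dots,Q_n$ of $P_{2B+3}$. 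The target is $k=m+2$, not $2m+1$. Because at most $m+1$ leaves of $G_s$ carry attachments, an isometric copy of $SP(m+4,m+1)$ sits inside $G$, so Lemma~\ref{sk} both gives $b(G)\ge m+2$ and (since $m+4\ge (m+1)+2$) forces $x_1=s$. Then $N_{m+1}[s]=V(G_s)$ exactly, and the remaining $m+1$ sources with reaches $m,m-1,\dots,0$ must cover $G\setminus G_s$, a path-forest of order precisely $(m+1)^2=\sum_{\ell\in O_{m+1}}\ell$. From that point your counting and switching arguments go through verbatim.

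So the gap in your plan is the ``$m+1$ forced type-(i) sources'' layer: it is both unnecessary and, as sketched, not a forcing mechanism in a spider. Drop it, shrink $k$ to $m+2$, and use a uniform base spider with enough arms; the ``delicate bookkeeping'' you were worried about then collapses to the single identity $|V(G\setminus G_s)|=(m+1)^2$.
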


\begin{proof}
Clearly, by Theorem \ref{3tree}, the burning problem is in \textbf{NP}. As in the proof of Theorem \ref{3tree}, we give a reduction from the Distinct $3$-Partition problem into the burning problem, in which the gadget graph that we construct is a spider graph.

Given an instance of the Distinct $3$-Partition problem, that is, a set $X =\{a_1, a_2, \ldots, a_{3n}\}$ of positive distinct integers and a positive integer $B$ such that each $B/4 < a_i < B/2$, we construct a graph $G$ as follows. 
Since the Distinct $3$-Partition problem is strongly \textbf{NP}-complete (as in the proof of Theorem \ref{3tree}), without loss of generality we assume that $B$ is bounded above by a polynomial in terms of $n$.

Suppose that $\max{X} = m+1$, and let $Y =\{2a_i -1 : a_i \in X\}$. Clearly, $Y\subseteq O_{m+1}$. Then we make a copy of the spider graph $SP(2m+5, m+1)$ with centre $s$, called $G_s$. Now, for any positive odd integer $l\in O_{m+1} \setminus Y$, we connect a leaf of copy of $P_l$ (a path on $l$ nodes) to a distinct leaf of $SP(2m+5, m+1)$. We attach $n$ copies of $P_{2B+3}$, called $Q_1, Q_2, \ldots, Q_n$ to distinct leaves of $SP(2m+5, m+1)$ that we have not used for attaching any other $P_l$, with $l \in O_{m+1} \setminus Y$. We call the resulting graph $G$. Clearly, $G$ is a spider tree with spider head $s$. Since $V(G)$ is the disjoint union of the spider graph $SP(2m+5, m+1)$ and the paths $Q_1, Q_2, \ldots, Q_n$, and the paths $P_l$, with $l\in O_{m+1} \setminus Y$, we have that
$$|V(G)| = \sum_{i=1}^{m+1} (2i-1) + (2m+5)(m+1) + 1 = (m+1)^2 + (2m+5)(m+1) +1,$$ which is of order $O(B^2)$ in terms of $B$. Since $B$ is bounded above by a polynomial in terms of $n$, it implies that we construct graph $G$ in polynomial time in terms of $n$. We want to show that, there is a partition of $X$ into $n$ triples such that the numbers in each triple add up to $B$ if and only if $b(G)\leq m+2$.

First, assume that there is a partition of $X$ into $n$ triples such that the numbers in each triple add up to $B$. Consequently, paths $Q_1, Q_2, \ldots, Q_n$ can be partitioned into smaller paths of orders $\{2a_i-1 : a_i \in X\}$. For $l \in Y$, we set $x_{m+2 - (\frac{l -1}{2})}$ to be the middle node of the paths $P_{l}$, applied in such a partition of $Q_1, Q_2, \ldots, Q_n$. Then we take $x_1 = s$, and for any $l\in O_{m+1} \setminus Y$, we set the middle node of $P_l$ as $x_{m+2 - (\frac{l-1}{2})}$. The sequence $(x_1, x_2, \ldots , x_{m+2})$ is a burning sequence for $G$. Thus, $b(G)\leq m+2$. 

For example, let
$X = \{10, 11, 12, 14, 15, 16\}$, and $B = 39$. Then the graph $G$ is shown in Figure \ref{tree}. Here, we have that $n=2$, and $m = \max\{a_i: a_i \in X\} - 1 = 15 $. Therefore, $Y = \{19, 21, 23, 27, 29, 31\}$, and $O_{16} \setminus Y = \{1,3,5,7,9, 11, 13, 15, 17, 25\}$. The red nodes in Figure \ref{tree} denote a burning sequence of length $17$ for tree $G$. 

\begin{figure}[]
\begin{center}
\includegraphics[scale=0.7]{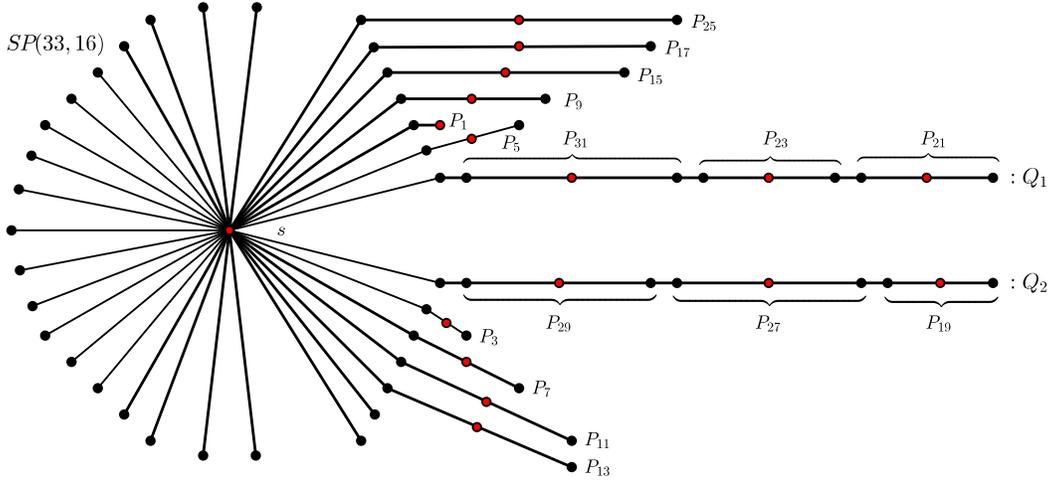}
\caption{A sketch of the tree $G$.} \label{tree}
\end{center}
\end{figure}

Conversely, suppose that $b(G)\leq m+2$. Since $G_s$ is an isometric subtree of $G$, then, Theorem \ref{subforest} and Lemma \ref{sk} imply that $b(G) = m+2$. Thus, $G$ has a burning sequence $(x_1, x_2, \ldots, x_{m+2})$. We have to show that there is a partition of $X$ into $n$ triples such that the numbers in each triple add up to $B$.  First, note that we use at most $m+1$ leaves of $G_s$  for attaching the paths $P_l$, with $l \in O_{m+1}$, and the paths $Q_1, Q_2, \ldots, Q_n$. Thus, there is a copy of $SP(m+4, m+1)$ that is an isometric subtree of $G$ and the only connection of its leaves to the rest of $G$ is through node $s$. Therefore, by Lemma \ref{sk}, we conclude that $x_1 = s$. On the other hand, by burning node $s$ at the first step, all the nodes in $G_s$ will be burned by the end of the $(m+2)$-th step. Thus, without loss of generality we can assume that for $2\leq i\leq m+2$, all $x_i$'s are selected from $G \setminus G_s$.

Now, by equation (\ref{eqq}), we know that $G \setminus G_s = \bigcup_{i=2}^{m+2}N_{m+2-i}[x_i]$. Since $G \setminus G_s$ is a path-forest, then $N_{m+2-i}[x_i]$ must be a path of order at most $l = 2(m+2 -i) +1$, for $2\leq i\leq m+2$. Besides, we have that 
\begin{align*}
|V(G \setminus G_s )|& = 2nB + 3n + \sum_{l\in O_{m+1} \setminus Y} l \\
& = \sum_{i=2}^{m+2} \left(2(m+2 -i) +1\right).
\end{align*}
Therefore, it implies that $N_{m+2-i}[x_i]$ must be a path of order exactly equal to $l = 2(m+2 -i) +1$, for $2\leq i\leq m+2$. 
Hence, there must be a partition of $G \setminus G_s$ by the set of paths of orders in $O_{m+1}$, in which the center of each path in the partition is a fire source.

We claim that there is a burning sequence for $G$ in which the central node of each $P_l$, $l\in O_{m+1} \setminus Y$ (that we attached to a leaf of $G_s$), is selected as a fire source. We can easily prove this claim by switching the paths that are possibly displaced in the current partition for $G \setminus G_s$.
Thus, the closed neighbourhoods of the rest of the fire sources form a partition for $Q_1, Q_2, \ldots, Q_n$ in terms of paths of orders $2a_i -1\in Y$. Since each $Q_i$ is of order $2B+3$, then it implies that there is partition for $ X$ into triples such that the elements in each triple  add up to $B$. 
\end{proof}

If we delete the spider graph $SP(2m+5, m+1)$ in the proof of Theorem \ref{trees}, and keep the rest of the parts of the gadget graph $G$ the same, then we will have the analogous argument for the disjoint union of the paths $Q_1, Q_2, \ldots, Q_n$, and the paths $P_l$ with $l\in O_{m+1} \setminus Y$. Thus, we can have a reduction from the Distinct $3$-Partition problem to the burning problem for the path-forests. Therefore, we conclude the following corollary.

\begin{corollary}
The Burning problem is \textbf{NP}-complete for path-forests.
\end{corollary}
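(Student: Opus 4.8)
As indicated in the paragraph preceding the statement, the plan is to run the reduction from Distinct $3$-Partition used for Theorem~\ref{trees}, but with the spider $SP(2m+5,m+1)$ (and hence the appeal to Lemma~\ref{sk}) removed from the gadget. Given an instance $X=\{a_1,\dots,a_{3n}\}$, $B$ with $\sum_i a_i=nB$ and $B/4<a_i<B/2$, set $m+1=\max X$, $Y=\{2a_i-1:a_i\in X\}\subseteq O_{m+1}$ and $Z=O_{m+1}\setminus Y$; since $X$ is a set of distinct integers, $|Y|=3n$, so $3n\le m+1$. Let $F=F(X)$ be the path-forest consisting of $n$ disjoint paths $Q_1,\dots,Q_n$, each of order $2B-3$, together with one disjoint copy of $P_l$ for each $l\in Z$. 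Because $\sum_{a\in X}(2a-1)=n(2B-3)$, the $Q_i$'s account for the orders in $Y$, so $|V(F)|=\sum_{l\in O_{m+1}}l=(m+1)^2$; as $B$ is polynomially bounded in $n$ (strong \textbf{NP}-completeness of Distinct $3$-Partition), $F$ is constructed in polynomial time, and the burning problem is in \textbf{NP} by the argument in the proof of Theorem~\ref{3tree}. The goal is to prove that $X$ admits a $3$-partition if and only if $b(F)\le m+1$.

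The one ingredient not inherited verbatim from Theorem~\ref{trees} is the lower bound $b(F)\ge m+1$, which there came from the isometric spider subgraph. Instead I would argue directly: in a path-forest every closed ball $N_r[v]$ is a subpath, hence has at most $2r+1$ vertices, so if $(x_1,\dots,x_k)$ is a burning sequence for $F$ then by equation~(\ref{eqq}) one gets $(m+1)^2=|V(F)|=\bigl|\bigcup_{i=1}^k N_{k-i}[x_i]\bigr|\le\sum_{i=1}^k\bigl(2(k-i)+1\bigr)=k^2$, whence $k\ge m+1$. Thus $b(F)\ge m+1$, and it remains to decide whether $b(F)=m+1$. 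For the forward direction, if $X$ has a $3$-partition I would decompose each $Q_i$ into three consecutive subpaths whose orders are the three elements of $Y$ corresponding to the triple assigned to $Q_i$ (they sum to $2B-3=|V(Q_i)|$), and let each $P_l$, $l\in Z$, be its own subpath. This partitions $F$ into subpaths of orders $1,3,\dots,2m+1$, exactly one of each. Taking the central vertex of the subpath of order $2j-1$ to be $x_{m+2-j}$, one checks that $N_{(m+1)-i}[x_i]$ is precisely the subpath assigned to $x_i$, so these balls cover $V(F)$, while for $i<j$ lying in a common component $Q_c$ the distance between the two centres is at least half the sum of the two subpath orders, i.e. $d(x_i,x_j)\ge\tfrac12(o_i+o_j)\ge\tfrac12(o_i-o_j)=j-i$ (here $o_i>o_j$). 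Hence $(x_1,\dots,x_{m+1})$ is a burning sequence and $b(F)=m+1$.

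Conversely, if $b(F)\le m+1$ then $b(F)=m+1$ by the lower bound, and for an optimum burning sequence $(x_1,\dots,x_{m+1})$ the relations $|V(F)|=\bigl|\bigcup_i N_{(m+1)-i}[x_i]\bigr|\le\sum_i\bigl(2((m+1)-i)+1\bigr)=(m+1)^2$ must hold with equality throughout; this forces every $N_{(m+1)-i}[x_i]$ to be a subpath of order exactly $2((m+1)-i)+1$ and these subpaths to be pairwise disjoint, so $F$ is partitioned into subpaths of orders exactly $O_{m+1}$, each being the ball centred at a source. Since each $P_l$ ($l\in Z$) is a whole component, the switching argument used in the proofs of Theorems~\ref{3tree} and~\ref{trees} lets us assume each such $P_l$ is covered by itself; the remaining subpaths, whose orders form $Y$, then partition $Q_1,\dots,Q_n$, so each $Q_c$ receives a subset $\{2a-1:a\in S_c\}$ of $Y$ with $2\sum S_c-|S_c|=2B-3$. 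Parity forces $|S_c|$ to be odd, and $B/4<a_i<B/2$ rules out $|S_c|=1$ and $|S_c|\ge5$, so $|S_c|=3$ and $\sum S_c=B$: the sets $S_c$ form the desired $3$-partition.

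The main obstacle is not conceptual — the gadget is that of Theorem~\ref{trees} with the spider deleted — but lies in verifying that nothing essential was carried by the spider. Concretely, the lower bound $b(F)\ge m+1$ must now be supplied by the perfect-square counting bound rather than by Lemma~\ref{sk}, and the reverse direction must be re-run without the anchoring identity $x_1=s$: in particular the ``every ball is a full subpath'' forcing and the switching normalization of the $P_l$'s need to be re-checked, together with pinning the order of each $Q_i$ to $2B-3$ so that a subset of $Y$ summing to it is forced to consist of exactly three elements. These steps are routine given the earlier results, but they are precisely where a mis-set parameter would break the reduction, so I would write them out in full.
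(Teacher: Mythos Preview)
Your argument is correct and is precisely the fleshed-out version of the paper's one-line sketch: drop the spider $SP(2m+5,m+1)$ from the gadget of Theorem~\ref{trees} and run the same reduction on the remaining path-forest, with the perfect-square counting bound $|V(F)|=(m+1)^2\le k^2$ replacing the appeal to Lemma~\ref{sk} for the lower bound $b(F)\ge m+1$. Your choice $|Q_i|=2B-3$ is exactly what makes $|V(F)|$ a perfect square and hence forces the tightness that drives the reverse direction; the paper's text writes $2B+3$, but your value is the one consistent with $\sum_{l\in Y}l=\sum_i(2a_i-1)=2nB-3n$.
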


Note that in Theorem \ref{trees} and the above corollary, we do not have any restriction on the number of the arms in $SP(2m+5, m+1)$ and on the length of the paths in constructing the gadget graphs. In other words, the parameter $m$ is unbounded. 

\section{Algorithms for Burning Path-Forests and Spider Graphs}

In this final section, we present a polynomial time algorithm that finds the burning number of path-forests when the number of components and their orders are restricted, and then we find another polynomial time algorithm that finds the burning number of spider trees with fixed number of arms and with restrictions on the length of the arms.
We first provide some terminology.

Let $G$ be a path-forest with components $Q_1, Q_2, \ldots, Q_k$, where $k\geq 1$, and the order of each path $Q_i$ is $l_i$ such that $l_1\geq l_2 \geq \cdots \geq  l_k$. In other words, we assume that the paths are indexed according to the decreasing order of their lengths. We say that $G$ is a \emph{maximal path-forest} if it can be decomposed into paths of orders $1, 3, \ldots, 2t-1$ for some positive integer $t$.
It is clear that such a graph $G$ is of order $t^2$.
Let $\mathrm{MPF}_t$ denote the set of all maximal path-forests of order $t^2$. If $G$ is a path-forest with burning number $t$, then $G$ corresponds to a sequence of positive integers such as $(l_1, l_2, \ldots, l_s)$, where $s\leq t$, and $l_1 \geq l_2\geq  \cdots \geq l_s$, in which $l_i$ denotes the order of the $i$-th component of $G$. 
From now on, we represent a path-forest with burning number $t$ by a sequence of integers as defined above.

We denote the set of maximal path-forests with $t$ components and with burning number $k$ by $\mathrm{MPF}^t_k$. For example, $\mathrm{MPF}^1_1 = \{P_1\} = \{(1)\}$.
In general, we can see that for any $k\geq 1$, $\mathrm{MPF}^k_k = \{(2k-1, 2k-3, \ldots, 1)\}$. 
Also, note that for any $k\geq 1$, $\mathrm{MPF}^1_k = \{P_{k^2}\}$.

\begin{algorithm}\label{alg1}
Suppose that $G = (s_1,s_2, \ldots, s_t)$, for a constant $t\geq 1$, represents a path-forest in which $s_i$ denotes the order of the $i$-th component of $G$, and $s_1\geq s_2 \geq \cdots \geq  s_t$. Let $m$ be a positive integer such that $s_1 \leq m$; that is, the order of the components of $G$ is bounded above by $m$. Then we perform the following steps.

\medskip

{\bf Stage 1.} 
First, for each $1\leq r \leq t-1$, we perform Stages 1.1 and 1.2:

{\bf Stage 1.1.}
We set $\mathrm{MPF}^r_r= \{(2r-1, 2r-3, \ldots, 1)\}$.

If $(s_1, s_2, \ldots, s_r) \not\in \mathrm{MPF}^r_r$, then go to the next step.

\medskip

{\bf Stage 1.2.} For $k\geq r+1$, we perform the following steps:

\medskip

{\bf Stage 1.2.1.} For each $H = (l_1, l_2, \ldots, l_{r-1}) \in \mathrm{MPF}^{r-1}_{k-1}$, we form the sequence $H' = (2k-1, l_1 , \ldots , l_{r-1})$. We rearrange the numbers in the sequence $H'$ if they do not appear in a decreasing order, and we add it to the set $\mathrm{MPF}^{r}_k$.

If $(s_1, s_2, \ldots, s_r) \subseteq H'$, then finish Stage 1.2, and go to the next stage.

\medskip

{\bf Stage 1.2.2.} For each $H = (l_1, l_2, \ldots, l_{r}) \in \mathrm{MPF}^{r}_{k-1}$, and each $1\leq i\leq r$, we form the sequence $H_i = (l_1 , \ldots ,l_{i-1}, l_i + 2k-1, l_{i+1}, \ldots, l_{r})$.
We rearrange the numbers in the sequences $H_i$ if they do not appear in a decreasing order, and we add them to the set $\mathrm{MPF}^r_k$.

If $(s_1, s_2, \ldots, s_r) \subseteq H_i$, then finish Stage 1.2, and go to the next stage.

\medskip

{\bf Stage 2.} For $r = t$, we perform the following steps:

\medskip

{\bf Stage 2.1.} We set $\mathrm{MPF}^t_t= \{(2t-1, 2t-3, \ldots, 1)\}$.

If $G \in \mathrm{MPF}^t_t$, then stop and return $b(G) = t$.

\medskip

{\bf Stage 2.2.} For $k\geq t+1$, we perform the following steps:

\medskip

{\bf Stage 2.2.1.} For each $H = (l_1, l_2, \ldots, l_{t-1}) \in \mathrm{MPF}^{t-1}_{k-1}$, we form the sequence $H' = (2k-1, l_1 , \ldots , l_{t-1})$. 
We rearrange the numbers in the sequence $H'$ if they do not appear in a decreasing order, and we add it to the set $\mathrm{MPF}^{t}_k$.

If $G \subseteq H'$, then stop and return $b(G) = k$.

\medskip

{\bf Stage 2.2.2.} For each $H = (l_1, l_2, \ldots, l_{t}) \in \mathrm{MPF}^{t}_{k-1}$, and for each $1\leq i\leq t$, we form the sequence $H_i = (l_1 , \ldots ,l_{i-1}, l_i + 2k-1, l_{i+1}, \ldots, l_{t})$.
We rearrange the numbers in the sequence $H_i$ if they do not appear in a decreasing order, and we add it to the set $\mathrm{MPF}^t_k$.

If $G \subseteq H_i$, then stop and return $b(G) = k$.

\end{algorithm}

The algorithm works since every graph $G$ that is not a subgraph of a graph in $\mathrm{MPF}^t_i$, for all $1\leq i< k$, but $G$ is a subgraph of a graph in $\mathrm{MPF}^t_k$, has burning number $k$. 
Besides, we have the following fact about Algorithm~\ref{alg1}.

\begin{theorem}\label{polyalg1}
Suppose that $G = (s_1,s_2, \ldots, s_t)$, for an integer constant $t\geq 1$, represents a path-forest in which $s_i$ denotes the order of the $i$-th component of $G$, and $s_1\geq s_2 \geq \cdots \geq  s_t$. Let $m$ be a positive integer such that $s_1 \leq m$; that is, the order of the components of $G$ is bounded above by $m$. If $t $ is a fixed constant in terms of $m$, then Algorithm \ref{alg1} finds the burning number of $G$ in polynomial time in terms of the input.
\end{theorem}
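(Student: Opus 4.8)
The plan is to show that both the number of objects Algorithm~\ref{alg1} ever stores and the work spent on each of them are bounded by a polynomial in $m$, so that (since $t$ is fixed) the whole procedure runs in polynomial time in the input. There are three quantities to control: the largest value of $k$ for which any set $\mathrm{MPF}^r_k$ is ever built, the cardinality of each such set, and the cost of one pass through the recursive steps for a fixed pair $(r,k)$.

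First I would bound the depth of the loop on $k$. Fix $r$ with $1\le r\le t$ and consider the path-forest $G_r=(s_1,\dots,s_r)$, whose order is $n_r:=\sum_{i=1}^r s_i\le rm\le tm$. By Theorem~\ref{path-forest}, $b(G_r)\le \lceil n_r^{1/2}\rceil + r-1\le \lceil (tm)^{1/2}\rceil + t-1=:K$. By the correctness statement quoted just before the theorem (every path-forest with exactly $r$ components and burning number $j$ is a subgraph of some element of $\mathrm{MPF}^r_j$, while it is a subgraph of no element of $\mathrm{MPF}^r_i$ with $i<j$), as soon as the loop reaches $k=b(G_r)$ the test ``$G_r\subseteq H'$'' or ``$G_r\subseteq H_i$'' succeeds, so Stage~1.2 (respectively Stage~2.2) terminates for that $r$. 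Hence no set $\mathrm{MPF}^r_k$ with $k>K$ is ever constructed, and $K=O(\sqrt{tm})$, which for fixed $t$ is $O(\sqrt{m})$.

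Next I would bound $|\mathrm{MPF}^r_k|$ for $r\le t$ and $k\le K$. By construction every element of $\mathrm{MPF}^r_k$ is a non-increasing sequence of exactly $r$ positive integers whose sum is $(k-1)^2+(2k-1)=k^2$, i.e.\ a partition of $k^2$ into $r$ parts; the individual parts may exceed $m$ (for instance $\mathrm{MPF}^1_k=\{(k^2)\}$), but each is at most $k^2\le K^2=O(tm)$. Since the number of partitions of an integer $N$ into at most $t$ parts is bounded by $N^{t}$, we get $|\mathrm{MPF}^r_k|\le (K^2)^{t}=O\big((tm)^{t}\big)$, a polynomial in $m$ because $t$ is fixed. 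For each pair $(r,k)$ the algorithm runs once through $\mathrm{MPF}^{r-1}_{k-1}$ and, with the at most $t$ choices of the index $i$, through $\mathrm{MPF}^r_{k-1}$; each of these $O(t)\cdot O((tm)^t)$ iterations only rearranges a length-$r$ sequence and performs the subgraph test ``$G\subseteq H$'' for two path-forests with at most $t$ components, which is a packing check of size depending only on $t$. Multiplying by the $O(tK)=O(t\sqrt{tm})$ pairs $(r,k)$ that are ever processed gives a total running time $O\big((tm)^{t+1/2}\,\mathrm{poly}(t,\log m)\big)$, polynomial in $m$ for fixed $t$.

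The step needing the most care is the termination/correctness input used in the first bound: one must check that at $k=b(G_r)$ the prefix $G_r$ really does occur as a subgraph of a sequence already placed in $\mathrm{MPF}^r_k$, so that the loop cannot run past $k=K$; this is exactly the quoted fact applied with $t$ replaced by $r$. The remaining points are routine: verifying that each recursive operation preserves membership in $\mathrm{MPF}^r_k$ (adjoining a new component $P_{2k-1}$, or lengthening one component by $2k-1$, turns a decomposition into $\{1,3,\dots,2k-3\}$ into one into $\{1,3,\dots,2k-1\}$), confirming that the subgraph test for constantly many short paths is a constant-size computation, and noting that ``polynomial in the input'' here means polynomial in $m$ — necessarily so, since merely writing down $\mathrm{MPF}^1_K=\{(K^2)\}$ already costs $\Theta(\log m)$ and the set sizes grow with $m$, i.e.\ the algorithm is pseudopolynomial if the orders $s_i$ are encoded in binary.
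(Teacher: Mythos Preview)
Your argument is correct and follows essentially the same route as the paper's: use Theorem~\ref{path-forest} to cap the largest $k$ ever reached at $O(\sqrt{m})$, observe that every generated sequence is a tuple of at most $t$ positive integers each bounded by $k^2=O(m)$ so that only $O(m^t)$ distinct sequences arise, and note that the per-sequence work (rearranging and the componentwise comparison $s_i\le l_i$) is $O(t)$. Your final bound $O((tm)^{t+1/2})$ is slightly looser than the paper's $O(tm^t)$ only because you sum over the $O(\sqrt{m})$ values of $k$ separately rather than using that the generated sequences are pairwise distinct across all $k$; the conclusion is the same.
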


\begin{proof}
Given the graph $G$, suppose that for some $k\geq t$, Algorithm \ref{alg1} stops by recognizing $G$ as a subgraph of a graph in $\mathrm{MPF}^t_k$. Note that $t$ is a fixed constant in terms of $m$. Thus, by Theorem \ref{path-forest}, we derive that if $H = (l_1, l_2, \ldots, l_r)$ is a graph in $\mathrm{MPF}^r_i$ (generated by Algorithm \ref{alg1}), with $1\leq r\leq t$ and $i\geq r$, then
$$b(H) \leq \lceil \sqrt{\sum_{j=1}^r l_j} \rceil + i-1 \leq \sqrt{mt} + t-1 = O(\sqrt{m}).$$
On the other hand, since $b(H) = i$, then there is a partition of the set $O_i$ into subsets $\{A_j\}_{j=1}^r$ such that $l_j = \sum_{a\in A_j} a$, for $1\leq j\leq r$. It implies that $l_j \leq \sum_{a\in O_i} a = i^2 =O(m)$, for $1\leq j\leq r$.
Hence, the length of the longest $l_j$ that appears in the representation of such a graph $H$ is of order $m$. Let $l = O(m)$ be the length of the longest component in a graph $H$ generated by Algorithm \ref{alg1}. Thus, any graph $H$ generated by Algorithm \ref{alg1} is a subgraph of the graph $G_0 = (l, l, \ldots, l)$ with $t$ components. Since these graphs are distinct, then the total number of graphs generated by Algorithm \ref{alg1} is of order $O(m^t)$. 

Moreover, note that for $r=t$ and $k\geq t$, each time that we add a new graph $H = (l_1, l_2, \ldots, l_t)$ to $\mathrm{MPF}^t_k$, we check to see if $G$ is a subgraph of $H$ or not. We simply can do this comparison by checking if $s_i \leq l_i$, for $1\leq i\leq t$. Thus, the total number of steps that we perform in Algorithm \ref{alg1} is bounded above by $O(tm^t)$.
Since $t$ is a fixed constant in terms of $m$, then Algorithm \ref{alg1} is a polynomial time algorithm in terms of the input.
\end{proof}

In the following, we try to find the burning number of spider graphs, again using a dynamical programming approach. 
First we need some facts to use for this algorithm. 
We state the following lemma since we use it for proving the next theorem.
Assume that $G$ and $H$ are two disjoint graphs, and $u\in G$ and $v\in H$ are two nodes. We can make a new graph $G+uv+H$ by adding edge $uv$ to $G\cupdot H$.

\begin{lemma}\label{glue}
If $G$ and $H$ are two disjoint non-empty graphs then we have that
$$b(G+uv+H)\leq b(G\cupdot H),$$
where $u\in V(G)$ and $v\in V(H)$.
\end{lemma}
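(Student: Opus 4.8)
The plan is to exploit a simple monotonicity fact: adding an edge never increases distances, and an upper bound on the burning number only requires the covering equation, not the accompanying distance inequalities. Concretely, I would invoke Corollary~\ref{cov3}, which certifies $b(F)\le k$ as soon as some length-$k$ sequence of vertices has closed neighbourhoods (with radii $k-1,k-2,\dots,0$) covering $V(F)$.

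First I would set $k = b(G\cupdot H)$ and fix an optimum burning sequence $(x_1,x_2,\ldots,x_k)$ for $G\cupdot H$, so that by equation~(\ref{eqq}),
$$\bigcup_{i=1}^{k} N^{G\cupdot H}_{k-i}[x_i] = V(G\cupdot H),$$
where I write $N^{F}_{j}[\cdot]$ for the $j$-th closed neighbourhood computed inside the graph $F$. Next I would note that $G\cupdot H$ is a spanning subgraph of $G+uv+H$: both graphs have vertex set $V(G)\cup V(H)$, and $E(G\cupdot H)=E(G)\cup E(H)\subseteq E(G+uv+H)$. Hence $d_{G+uv+H}(a,b)\le d_{G\cupdot H}(a,b)$ for all vertices $a,b$ (with the convention that the right-hand side is $\infty$ when $a,b$ lie in different components of $G\cupdot H$), and therefore $N^{G\cupdot H}_{j}[w]\subseteq N^{G+uv+H}_{j}[w]$ for every vertex $w$ and every $j\ge 0$. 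Combining these observations,
$$\bigcup_{i=1}^{k} N^{G+uv+H}_{k-i}[x_i] \supseteq \bigcup_{i=1}^{k} N^{G\cupdot H}_{k-i}[x_i] = V(G\cupdot H) = V(G+uv+H),$$
so equality holds, and Corollary~\ref{cov3} gives $b(G+uv+H)\le k = b(G\cupdot H)$.

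The only point worth flagging — and the reason the argument uses Corollary~\ref{cov3} rather than the full ``if and only if'' characterization via equation~(\ref{eqq}) — is that the distance conditions $d(x_i,x_j)\ge j-i$ that are part of that characterization need not be preserved when the edge $uv$ is added. But these conditions are immaterial for an upper-bound argument, since Corollary~\ref{cov3} guarantees that any sequence whose weighted closed neighbourhoods cover the graph already yields a burning sequence of the same length. So there is no real obstacle: the lemma is a one-line monotonicity consequence of the covering criterion.
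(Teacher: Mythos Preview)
Your argument is correct and is essentially the same as the paper's: the paper observes that any burning sequence for $G\cupdot H$ ``induces a covering'' for $G+uv+H$ (since the vertex sets agree and adding an edge can only enlarge closed neighbourhoods), and then implicitly appeals to Corollary~\ref{cov3}. You have simply made explicit the spanning-subgraph and distance-monotonicity details that the paper leaves to the reader.
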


\begin{proof}
Since $V(G+uv+H)= V(G\cupdot H)$, then every burning sequence for $G\cupdot H$ induces a covering for $G+uv+H$; in particular, any minimum burning sequence of $G\cupdot H$ induces a covering for $G+uv+H$. Therefore, $b(G+uv+H)\leq b(G\cupdot H)$.
\end{proof}

The following theorem plays a key role in the algorithm that we will present, and shows that for a spider tree we always can have an optimum burning sequence in which the first source of fire is close to the spider head.
 
\begin{theorem}\label{key}
If $G$ is a spider graph with $s\geq 3$ arms and the spider head $c$, then there is an optimum burning sequence $(x_1, x_2, \ldots, x_k)$ for $G$ such that $d(x_1, s) \leq k-1$.
\end{theorem}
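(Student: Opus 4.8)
The plan is to start from an arbitrary optimum burning sequence $(x_1,x_2,\ldots,x_k)$ for $G$, regard it through the covering characterisation (equation~(\ref{eqq}) and Corollary~\ref{cov3}) as a covering $\bigcup_{i=1}^{k}N_{k-i}[x_i]=V(G)$, and show that the radius-$(k-1)$ ball can be re-centred to within distance $k-1$ of the spider head $c$. If $d(x_1,c)\le k-1$ there is nothing to prove, so suppose $d(x_1,c)\ge k$. The structural fact I would use is that in a spider a closed ball $N_r[v]$ either contains $c$ or is entirely contained in the arm through $v$, in which case it is a subpath of that arm. Hence $N_{k-1}[x_1]$ is a subpath of a single arm $A$ avoiding $c$; let $z$ be its endpoint nearest $c$ and $w$ the leaf of $A$.

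Among all optimum burning sequences I would pick one with $d(x_1,c)$ as small as possible and argue that $d(x_1,c)\ge k$ is impossible by shifting $x_1$ one step toward $c$, to its $c$-side neighbour $x_1'$. A direct computation along the path $A$ shows that, relative to $N_{k-1}[x_1]$, the ball $N_{k-1}[x_1']$ gains exactly one node of $A$ (the one one step nearer $c$ than $z$, which equals $c$ itself when $d(x_1,c)=k$) and loses at most one node, namely the node $z^{+}$ of $A$ at distance $k-1$ from $x_1$ on the side of $w$, and then only if the arm is long enough for $z^{+}$ to exist. If nothing is lost, or if $z^{+}$ already lies in some $N_{k-i}[x_i]$ with $i\ge2$, then $(x_1',x_2,\ldots,x_k)$ is still a covering of $V(G)$; by Corollary~\ref{cov3} this yields $b(G)\le k$, and combined with the lower bound $b(G)=k$ it produces an optimum burning sequence in which the first source is strictly closer to $c$, contradicting minimality.

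The essential case is when shifting uncovers the single node $z^{+}$, which $N_{k-1}[x_1]$ alone covers. Here I would carry out a rearrangement localised to the arm $A$. Cut the edge $e$ of $A$ immediately on the $c$-side of $z^{+}$; this detaches the terminal subpath $R$ (from $z^{+}$ to $w$) as a separate component, so that $G=G'+e+R$ with $G'=G-V(R)$, and by Lemma~\ref{glue} any covering of $G'\cupdot R$ induces a covering of $G$ with the same vertex set. On this disjoint union one is free to place the large ball into $G'$ near $c$ (so that it contains $c$, hence is centred within distance $k-1$ of $c$) and to re-cover the path $R$ using the small balls $N_{k-i}[x_i]$, $i\ge2$, whose centres lay in $R$, sliding them outward toward $w$ so that they absorb $z^{+}$; the admissibility of the resulting partition of $R$ into subpaths is exactly the kind of one-dimensional packing controlled by the decomposition statement of Theorem~\ref{path}. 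The induced covering of $G$ then contradicts the minimality of $d(x_1,c)$, so some optimum burning sequence has $d(x_1,c)\le k-1$.

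I expect the last step to be the main obstacle: because every ball other than the first has radius at most $k-2$, one must verify that the outward reallocation on the arm $A$ always closes up, and the bookkeeping---tracking precisely which fringe nodes of $N_{k-1}[x_1]$ disappear and which are gained as the centre moves, and checking that the induced subpath lengths remain realisable---is where the genuine work sits. The preliminary ingredients (the ball dichotomy in a spider, and the extremal choice of burning sequence) are routine, as is the passage from a covering back to a burning sequence via Corollary~\ref{cov3}.
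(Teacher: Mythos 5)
Your overall strategy --- fix an optimum burning sequence minimizing $d(x_1,c)$, and when $d(x_1,c)\ge k$ shift the radius-$(k-1)$ ball one step toward $c$ and repair the single node it drops --- is genuinely different from the paper's proof, which inducts on $|V(G)|$, re-burns the complement spider $G\setminus N_{k-1}[x_1]$ from scratch via the induction hypothesis, and then swaps the roles of the two largest balls. Unfortunately the step you yourself flag as the obstacle is a real gap, and the local mechanism cannot be made to work: the set of vertices that can serve as first source in \emph{some} optimum burning sequence need not form an interval along the arm, so a one-step shift from a valid position can land on a position that admits no completion at all, repaired or otherwise. Concretely, let $G$ be the spider with arms of lengths $6,1,1$, so $b(G)=k=3$. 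Write $1,\dots,6$ for the vertices of the long arm ($1$ adjacent to $c$) and $a,b$ for the two pendant leaves. Then $(3,\,c,\,6)$ is an optimum burning sequence with $d(x_1,c)=3=k$, and it is exactly your essential case: the vertex $5=z^{+}$ lies only in $N_2[3]$. Shifting $x_1$ to vertex $2$ leaves $\{5,6,a,b\}$ uncovered, and no choice of $x_2,x_3$ covers these four vertices by $N_1[x_2]\cup N_0[x_3]$, since no radius-one ball of $G$ contains more than two of them. So vertex $2$ supports no optimum burning sequence whatsoever, and sliding the small balls on the terminal path $R$ (here $N_0[6]$, with zero slack) cannot absorb $z^{+}$.

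The fallback in your sketch --- relocating the big ball all the way to a neighbourhood of $c$ and re-covering only $R$ --- has a second unaddressed hole: that relocation uncovers up to $2k-1$ vertices in the middle of the arm $A$, not just $z^{+}$, while simultaneously freeing the small balls that had been covering the other arms; balancing these two effects is precisely the global rearrangement your argument omits, and it is not a one-dimensional packing question on $A$ alone. This is what the paper's induction buys: after using Lemma~\ref{glue} to arrange that $G'=G\setminus N_{k-1}[x_1]$ is again a spider, the induction hypothesis supplies a completely new optimal covering of $G'$ whose largest ball sits within distance $k-2$ of $c$, and exchanging the radii of that ball and of the old first ball (each recentred by one vertex) produces the desired sequence without any claim that the original small balls can be salvaged in place. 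Any repair of your extremal argument would have to let the first source jump directly to within distance $k-1$ of $c$ and re-cover the entire complement, which is the paper's induction in disguise.
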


\begin{proof}
We prove this by strong induction on the number of the nodes in $G$.
The smallest order spider graph is a star with three leaves.
By Theorem \ref{b222},  we know that the burning number of such a star equals $2$ and in every optimum burning sequence for this graph the first fire must be the centre that is the spider head. Hence, the theorem statement is true for this spider.

Now, suppose that the theorem statement is true for every spider graph of order at most $n-1$, and $G$ is a spider graph of order $n$ with $s\geq 3$ arms and spider head $c$. Also, assume that $L_1, L_2, \ldots, L_s$ are the arms of $G$, and $v_1, v_2, \ldots, v_s$ are their corresponding leaves. Finally, suppose that the order of each arm $L_i$ is denoted by $l_i$.
Let $(x_1, x_2, \ldots, x_k)$ be an optimum burning sequence for $G$. By equation~(\ref{eqq}), we know that
$$V(G) = N_{k-1}[x_1] \cup N_{k-2}[x_2] \cup \ldots \cup N_0[x_k].$$
If $d(x_1, c) \leq k-1$, then we are done. Hence, let $d(x_1, c) \geq k$, and $x_1 \in L_i$ where $1\leq i\leq s$.
We consider two possibilities for $l_i$: either $l_i\leq 2k-2$ or $l_i \geq 2k-1$.

{\bf Case 1.} If $l_i \leq 2k-2$, then it implies that $d(c,v_i) \leq 2k-2$. Let $x$ be the node in $L_i$ for which $d(x,v_i) = k-1$. Therefore, we have that $d(c, x) \leq k-1$. Note that we can cover all the nodes in $L_i \cup \{c\}$ with $N_{k-1}[x]$. Hence, $V(G) \setminus N_{k-1}[x] \subseteq V(G) \setminus N_{k-1}[x_1]$. Thus, we still have that 
$$V(G) = N_{k-1}[x] \cup N_{k-2}[x_2] \cup \ldots \cup N_0[x_k].$$
Note that some of the fire sources $x_j$'s, with $j\geq 2$, might be in $N_{k-1}[x] \cap L_i$. 
Therefore, we have that $b(G \setminus N_{k-1}[x]) = t \leq k-1$. Hence, we can find a burning sequence of length $t$ such as $(x'_2, x'_3, \ldots, x'_t)$ for $G \setminus N_{k-1}[x]$. Also, for $t +1 \leq j \leq k$, we define $x'_j$ to be a node of distance $j-1$ from $x$. Thus, for $t+1 \leq j \leq k$, $d(x'_j , x) \geq j-1$, and 
$d(x'_j , x_r) \geq r - 1 + j-1 \geq j-r$, for any $2\leq r \leq t$. Therefore, the sequence $(x'_1 = x, x'_2, \ldots, x'_k)$ forms a desired optimum burning sequence for $G$.

{\bf Case 2.} If $l_i \geq 2k-1$, then either $d(v_i , x_1) \leq k-1$ or $d(v_i, x_1) \geq k$.
We claim that there is a burning sequence for $G$ such as $(x'_1, x'_2, \ldots , x'_k)$ such that $x'_1 \in L_i$ and $d(x'_1, v_i) \leq k-1$, or equivalently, $G \setminus N_{k-1}[x'_1]$ is connected. If $d(x_1, v_i) \leq k-1$, then we are done. If  $d(v_i , x_1) \geq k$, then 
$G \setminus N_{k-1}[x_1]$ is the disjoint union of a spider graph $G'$ and a path $P$, such that $P$ is a subpath of $L_i$ containing $v_i$. Let $u$ be the leaf of $G'$ that is in $L_i$, and $v$ be the other end point of $P$ that probably is different from $v_i$. We know that $b(G \setminus N_{k-1}[x_1])\leq k-1$. Hence, by Lemma \ref{glue}, we have that 
$$t = b(G' +uv+  P) \leq b(G' \cup P) = b(G \setminus N_{k-1}[x_1]) \leq k-1.$$
Note that $G' +uv+ P$ is a subtree of $G$ that is (isomorphic to) a spider of the same number of arms as $G$. In fact, the $i$-th arm of $G' +uv+ P$ is (isomorphic to) a subpath of $L_i$ with exactly $2k-1$ less nodes than $L_i$. Also, note that some of the fire sources $x_j$'s, with $j\geq 2$, might be in $N_{k-1}[x_1] \cap L_i$. Let $(x'_2, x'_3, \ldots, x'_t)$ be an optimum burning sequence for $G' +uv+ P$, and $x'_1$ be the node in $L_i$ with $d(x'_1 , v_i) = k-1$. Also, for $t+1 \leq j \leq k$, we take $x'_j$ to be a node of distance $j$ from $x'_1$ that is on the path connecting $x'_1$ and $v_i$.
Thus, the sequence $(x'_1, x'_2, \ldots, x'_k)$ forms a burning sequence for $G$, such that $x'_1 \in L_i$, and $G \setminus N_{k-1}[x'_1]$ is connected.

Now, by above claim, without loss of generality, we assume that $N_{k-1}[x_1]$ contains $v_i$. That is, we have a burning sequence $(x_1, x_2, \ldots, x_k)$ for $G$ such that $G' = G \setminus N_{k-1}[x_1]$ is a spider graph with smaller number of nodes than $G$, and with the same number of arms and the same spider head $c$. In fact, for $j\neq i$, and $1\leq j\leq s$, $L_j$ is the $j$-th arm of $G'$ too, and the $i$-th arm of $G'$ is a subset of $L_i$ that contains exactly $2k-1$ nodes less than $L_i$. 
Hence, we have that $b(G')= t \leq k-1$, and by induction hypothesis, $G'$ must have a burning sequence $(x'_2, x'_3, \ldots, x'_t)$ such that $d(x'_2, c) \leq t-1 \leq k-2$. We have two possibilities: either $x'_2 \in L_i$, or $x'_2 \in L_j$ for some $j\neq i$.  

If $j = i$, then let $x$ be the neighbour of $x'_2$ that is on the path which connects $x'_2$ to $v_i$. Also, let $x'$ be the neighbour of $x_1$ that is on the path connecting $x_1$ to $v_i$.
Hence, we have that $$G \setminus (N_{k-1}[x] \cup N_{k-2}[x']) = G \setminus (N_{k-1}[x_1] \cup N_{k-2}[x'_2]).$$
Now, for $t+1 \leq r \leq k$, we take $x'_r$ to be the node in $L_i$ on the path connecting $v_i$ to $x'$ that is of distance $r -2$ from $x'$. Finally, we take $x'_1 = x$, and we redefine $x'_2 = x'$. Thus, the sequence $(x'_1, x'_2, \ldots, x'_k)$ forms a burning sequence for $G$ in which $d(x'_1, c) \leq k-1$. 

If $j\neq i$, then let $x$ be the neighbour of $x'_2$ that is on the path connecting $x'_2$ to $c$. Also, let $x'$ be the neighbour of $x_1$ that is closer to $v_i$.
Hence, we have that 
$$L_i \setminus (N_{k-1}[x] \cup N_{k-2}[x']) = L_i \setminus (N_{k-1}[x_1] \cup N_{k-2}[x'_2]),$$ 
(by isomorphism). 
Also, $$L_j \setminus (N_{k-1}[x] \cup N_{k-2}[x']) = L_j \setminus (N_{k-1}[x_1] \cup N_{k-2}[x'_2]).$$ 
But, $$G \setminus (N_{k-1}[x] \cup N_{k-2}[x']) \subseteq G \setminus (N_{k-1}[x_1] \cup N_{k-2}[x'_2]),$$ 
and we know that $N_{t-2}[x'_3] \cup N_{t-3}[x'_4] \cup \ldots \cup N_0[x'_t]$ forms a covering for $G \setminus (N_{k-1}[x] \cup N_{k-2}[x'])$. In fact, $G \setminus (N_{k-1}[x] \cup N_{k-2}[x'])$ is an isometric subforest of $G \setminus (N_{k-1}[x_1] \cup N_{k-2}[x'_2])$.
Thus, by Corollary \ref{subforest}, we have that 

\begin{align*}
b(G \setminus (N_{k-1}[x] \cup N_{k-2}[x'])) &\leq b(G \setminus (N_{k-1}[x_1] \cup N_{k-2}[x'_2])) \\
& \leq t-1 \leq k-2.
\end{align*} 

Hence, there must be an optimum burning sequence $(x''_3, x''_4, \ldots, x''_{t'})$, where $t'\leq t$ for $G \setminus (N_{k-1}[x] \cup N_{k-2}[x'])$.
Now, for $t'+1 \leq r \leq k$, we take $x''_r$ to be the node in $L_i$ on the path connecting $v_i$ to $x'$ that is of distance $r -2$ from $x'$. Finally, we take $x''_1 = x$, and we define $x''_2 = x'$. Thus, the sequence $(x''_1, x''_2, \ldots, x''_k)$ forms a burning sequence for $G$ in which $d(x''_1, c) \leq k-1$. 
\end{proof}

The following lemma provides us with another key tool for finding the burning number of spider graphs.

\begin{lemma}\label{key2}
Let $G$ be a spider graph with spider head $c$. 
Also, suppose that for a positive integer $k$ and a node $x\neq c$ in $G$, $G\setminus N_{k-1}[x]$ is a path-forest (that is, $d(x,c) \leq k-1$)  with at least two components, and  $b(G \setminus N_{k-1}[x]) \leq k-1$. If $x\neq c$, and the neighbour of $x$ on the path connecting $x$ to $c$ is $x'$, then we have that $b(G \setminus N_{k-1}[x']) \leq k-1$. 
\end{lemma}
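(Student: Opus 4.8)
Write $F = G\setminus N_{k-1}[x]$ and $F' = G\setminus N_{k-1}[x']$, let $L$ be the arm of $G$ containing $x$, and set $a = d(x,c)\le k-1$. The plan is to compare $F$ and $F'$ vertex by vertex and then use the combinatorial description of the burning number of a path-forest. First I would record the geometry: passing from $N_{k-1}[x]$ to $N_{k-1}[x']$ (moving the ball one step toward $c$) removes from the arm $L$ one vertex fewer, at the end nearer $c$, and removes from every other arm one vertex more, at the end nearer $c$. Hence every component of $F$ lying in an arm different from $L$ is, in $F'$, the subpath obtained by deleting its $c$-end vertex (possibly becoming empty), while the component of $F$ in $L$ gains a single vertex at its $c$-end; if $F$ has no component in $L$, then $F'$ has at most one extra one-vertex component there. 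In short, $F'$ is obtained from the path-forest $F$ by attaching at most one new vertex (as a pendant on one component, or as a new singleton component) and deleting an end vertex from each of the other nonempty components.

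Next I would dispose of the slack cases. If $b(F)\le k-2$, then burning the new vertex as a final step after an optimum burning of $F$ burns $F$ together with that vertex in $k-1$ steps; since $F'$ is an isometric subforest of this augmented forest, Corollary~\ref{subforest} yields $b(F')\le k-1$. Likewise, if $F$ has no component in $L$ and no new component appears, then $F'$ is an isometric subforest of $F$ and $b(F')\le b(F)=k-1$ by Corollary~\ref{subforest}. So I may assume $b(F)=k-1$ and that exactly one component of $F'$ is strictly longer than the corresponding component of $F$, namely the one meeting $L$.

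The main step uses the following refinement of Theorem~\ref{path}: a path-forest with components of orders $n_1,\dots,n_r$ has burning number at most $m$ if and only if $\{0,1,\dots,m-1\}$ can be partitioned into sets $R_1,\dots,R_r$ with $n_i\le\sum_{\rho\in R_i}(2\rho+1)$ for each $i$; indeed, from such a partition one cuts $P_{n_i}$ into consecutive subpaths of orders at most $2\rho+1$ $(\rho\in R_i)$ and burns each from its centre, invoking Corollary~\ref{cov3}. I would fix such a partition for $F$ with $m=k-1$ and then modify it for $F'$ by transferring the index $0$ (the singleton block, i.e.\ the last fire source) from one of the shrinking components to the component that grows: the donor component then has one fewer vertex to cover, which exactly offsets the loss of that block, and the grown component absorbs its extra vertex with the index-$0$ source. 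This works verbatim whenever the chosen partition assigns the index $0$ to one of the shrinking components.

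The main obstacle is the case where the chosen partition is forced to assign the index $0$ to the component of $F$ in $L$ itself, and all blocks are tight (which forces $|F|=(k-1)^2$); then the simple transfer is unavailable. I expect the crux of the proof to be exactly this tight configuration: one must either re-choose the optimum burning of $F$ so that its last fire source lies in an arm other than $L$, or argue directly with the distances inside the spider $G$ rather than with the isomorphism type of $F$ alone. Since this is precisely the regime in which the claimed inequality is tightest, a careful combinatorial case analysis there seems unavoidable.
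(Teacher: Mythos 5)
Your setup, the reduction to the case $b(F)=k-1$ with exactly one growing component, and the index-transfer when the singleton block (index $0$) sits on a shrinking component all match the paper's argument, which likewise works by relocating fire sources of an optimal covering of $F$ to obtain a covering of $F'$ by $t\le k-1$ closed neighbourhoods and then invoking Corollary~\ref{cov3}. But the configuration you flag at the end --- the last fire source lies in the component $P\subseteq L$ that grows --- is not a fringe case you may defer: it is exactly where the lemma's content lives, and your proposal stops short of handling it. (It is also not confined to the fully tight situation $|F|=(k-1)^2$; any chosen partition may happen to place index $0$ on $P$, and ``re-choosing'' the partition is itself the thing that needs proof.) So as written there is a genuine gap.

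The paper closes it with a two-index exchange rather than a single-index transfer. Take an optimal burning sequence $(x_1,\dots,x_t)$ of $F$ with $x_t\in P$. Because $F$ has at least two components (this is where that hypothesis is used), not every $x_j$ lies in $P$, so there is a smallest $i$ with $x_i\in P$ and $x_{i-1}\in Q$ for some component $Q\neq P$. Swapping the positions of $x_{i-1}$ and $x_i$ trades a radius-$(t-i)$ neighbourhood in $P$ for a radius-$(t-i+1)$ one, gaining two units of coverage on $P$ and losing two on $Q$; then the index-$0$ source is moved from $P$ into $Q$. The bookkeeping is $+2-1=+1$ for $P$, matching its growth by one vertex, and $-2+1=-1$ for $Q$, matching its shrinkage by one vertex, while all other components only shrink and keep their blocks. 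This ``swap adjacent radii between $P$ and $Q$, then relocate index $0$'' step is precisely the move your single transfer of index $0$ cannot simulate, and it is what you would need to add to complete the proof.
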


\begin{proof}
Assume that a spider graph $G$ with the above conditions is given, and we have the nodes $x$ and $x'$ as mentioned in the lemma's statement. Let $x$ be in an arm of $G$ called $L_
s$. We have two possibilities for $L_s$: either $L_s \setminus N_{k-1}[x]$ is empty or not.

First, suppose that $L_s \setminus N_{k-1}[x]$ is not empty. Hence, since in this case one of the components of $G\setminus N_{k-1}[x]$ is contained in $L_s$, then it implies that $L_s$ is of order at least $k + d(x,c)$.
By assumption, we know that each component of $G_1 = G\setminus N_{k-1}[x]$ is a subset of one of the arms in $G$. Let $G_2$ be the path-forest $G \setminus N_{k-1}[x']$. We know that $G_2$ is a path-forest since by assumption, $d(x',c) \leq d(x,c)\leq k-1$.
Hence, each component of $G_2$ is also a subpath of an arm in $G$.
We call the components of $G_1$ and $G_2$ that are subpaths of $L_s$ by $P$ and $P'$, respectively.
In fact, $P'$ is a superset of $P$ with exactly one more node. Also, each non-empty component of $G_2$ such as $Q' \neq P'$ is a subset of the corresponding component $Q \neq P$ of $G_1$, and has exactly one node less than $Q$. 

Since, by assumption, $b(G_1) = t \leq k-1$, then there must be a burning sequence $(x_1, x_2, \ldots, x_{t})$ for $G_1$. Note that each $N_{t-j}[x_j]$ is a path of order at most $2(t-j) +1$. 
Therefore, the closed neighbourhoods of the $x_i$'s cover all the nodes in $G_2$, except for probably the extra node in $P'$ that is a superset of $P$. We have two possibilities: either there is a component $Q \neq P$ in $G_1$ that is of order one, or the order of each component of $G_1$ is of order at least two.

If there is a component $Q\neq P$ of $G_1$ that is of order one and is burned by $x_{i}$, then let $x'_{i}$ be the extra node in $P' \setminus P$. 
If $(x_1, x_2, \ldots, x_{t})$ does not burn $x'_i$, then the sequence $(x_1, \ldots, x_{i-1}, x'_i, x_{i+1}, \ldots, x_{t})$ is a burning sequence for $G_2$. Thus, $b(G_2) \leq k-1$.

If every component of $G_1$ like $Q \neq P$ is of order at least two, then we have again two possibilities: either $x_{t}$ is in $P$, or $x_{t} \not\in P$.

If $x_{t}$ is in $P$, then let $i$ be the smallest index for which $x_i \in P$, but $x_{i-1}$ is not in $P$.
We know that such an index $i$ does exist, since otherwise, it means that all the $x_i$'s must be in $P$, and consequently, it implies that $P$ is the only component of $G_1$, which is a contradiction. 
Thus, there must an index $i$ such that $x_i \in P$, but $x_{i-1}$ is in a component of $G_1$ that we call it $Q$, with $Q \neq P$.

Since, each $N_{t-j}[x_j]$ is a path of order at most $2(t-j) +1$, then without loss of generality we can assume that $N_{t-i}[x_i]$ covers at least two nodes less than $N_{t-(i-1)}[x_{i-1}]$.
Now, let $x'_i = x_{i-1}$ and $x'_{i-1} = x_i$. Therefore, we have a new covering for $G_2$ induced by $(x_1, \ldots, x_{i-2}, x'_{i-1} , x'_{i}, \ldots , x_{t})$ in which all the nodes of $P'$ plus one extra node of $L_s \setminus P'$ is covered, while we may have lost covering one node in $Q$. Now, by moving $x_{t}$ to cover such a uncovered node in $Q$, and shifting the place of the fire sources used for covering $P$ without changing their order (if it is necessary), we find a covering for $G_2$ with $t$ closed neighbourhoods of restricted radii. Hence, by Corollary \ref{cov3}, $b(G_2) \leq k-1$. 

If $x_{t}\not\in P$, then there must be a component $Q$ of $G_1$ for which $x_{t}\in Q$.
Let $Q'$ be the corresponding component of $G_2$ that has exactly one node less than $Q$.
By moving $x_{t}$ to cover the extra node in $P'$ (and shifting the place of the fire sources used for covering $Q$ without changing their order, if it is necessary), we find a covering for $G_2$ by $t$ closed neighbourhoods with restricted radii. Hence, again in this case, $b(G_2) \leq k-1$.

Now, assume that $L_s \setminus N_{k-1}[x]$ is empty, and $G_1 = G \setminus N_{k-1}[x]$, and $G_2 = G \setminus N_{k-1}[x']$. If $L_s \setminus N_{k-1}[x']$ is empty, then $G_2$ is an isometric subforest of $G_1$, and therefore $b(G_2) \leq b(G_1) \leq k-1$.

If $L_s \setminus N_{k-1}[x']$ is non-empty, then it means that $P' = L_s \setminus N_{k-1}[x']$ contains exactly one node. Also, we know that all the non-empty components of $G_2$ are subsets of the corresponding components of $G_1$, with exactly one less node. Assume that $(x_1, x_2, \ldots, x_t)$ is an optimum burning sequence for $G_1$.
Since, $L_s \setminus N_{k-1}[x]$ is empty, then there must be non-empty component of $G_1$ like $Q$ for which $x_t \in Q$. By moving $x_{t}$ to cover the extra node in $P'$ (and shifting the place of the fire sources used for covering $Q$ without changing their order, if it is necessary), we find a covering for $G_2$ by $t$ closed neighbourhoods with restricted radii. Hence, again in this case we conclude that $b(G_2) \leq k-1$.
\end{proof}

As a consequence of the above lemma we have the following result.

\begin{figure}[H]
\centering
\begin{subfigure}{.35\textwidth}
  \centering
  \includegraphics[width=.45\linewidth]{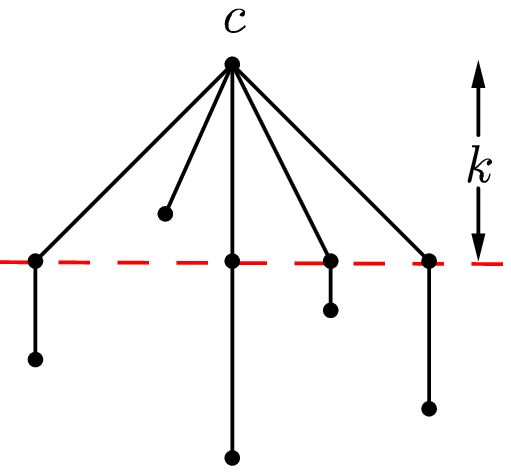}
  \caption{Part $(i)$}
  %\label{fig:sub1}
\end{subfigure}%
\begin{subfigure}{.35\textwidth}
  \centering
  \includegraphics[width=.45\linewidth]{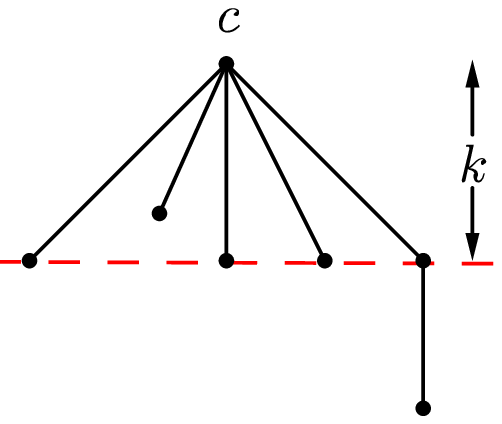}
  \caption{Part $(ii)$}
  %\label{fig:sub2}
\end{subfigure}
%\caption{}
%\label{fig:test}
\begin{subfigure}{.35\textwidth}
  \centering
  \includegraphics[width=.45\linewidth]{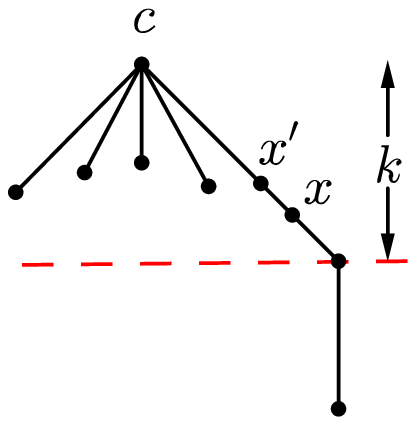}
  \caption{Part $(iii)$}
  %\label{fig:sub2}
\end{subfigure}
\caption{}\label{head}
\end{figure}
\begin{lemma}\label{burningspiderhead}
Let $G$ be a spider graph with spider head $c$.
Also, suppose that for a positive integer $k$ and a node $x$ in $G$, $G\setminus N_{k-1}[x]$ is a non-empty path-forest (that is, $d(x,c) \leq k-1$)  with at least one component, and  $b(G \setminus N_{k-1}[x]) \leq k-1$. If $x\neq c$, then we have one of the following possibilities:

$(i)$ The graph $G\setminus N_{k-1}[c]$ has at least two components, and $b(G\setminus N_{k-1}[c]) \leq k-1$.

$(ii)$ There is a leaf in $G$ that is of distance $k-1$ from $c$, and $G\setminus N_{k-1}[c]$ has only one component, and $b(G\setminus N_{k-1}[c]) \leq k-1$.

$(iii)$ There is a node $x' \not\in \{ x, c\}$ on the path connecting $x$ to $c$ for which $G \setminus N_{k-1}[x']$ has only one component (that is a subset of $L_s$), and $b(G \setminus N_{k-1}[x']) \leq k-1$, and there is a leaf of $G$ that is of distance $k-1$ from $x'$.
\end{lemma}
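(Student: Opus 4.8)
The plan is to iterate Lemma~\ref{key2}, sliding the deleted ball from $x$ towards the spider head $c$ and reading off which of the three pictures occurs from where the sliding stops. Let $L_s$ be the arm of $G$ containing $x$, and write the unique $x$--$c$ path as $x=u_0,u_1,\dots,u_d=c$, where $d=d(x,c)\le k-1$. For $0\le i\le d$ put $G_i:=G\setminus N_{k-1}[u_i]$; since $d(u_i,c)\le k-1$, each $G_i$ is a path-forest, with $G_0=G\setminus N_{k-1}[x]$ and $G_d=G\setminus N_{k-1}[c]$. Let $j^\ast$ be the smallest index $i$ for which $G_i$ has fewer than two components, and set $j^\ast=d$ if no such index is less than $d$. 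Starting from the hypothesis $b(G_0)\le k-1$, and using that for every $i<j^\ast$ the forest $G_i$ has at least two components and $u_i\ne c$, repeated application of Lemma~\ref{key2} yields $b(G_i)\le k-1$ for all $0\le i\le j^\ast$.

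The engine of the argument is a structural description of the $G_i$ in arm coordinates (distance from $c$ along each arm), essentially already contained in the proof of Lemma~\ref{key2}. Namely, every component of $G_i$ is the terminal sub-path of a distinct arm of $G$ that $N_{k-1}[u_i]$ fails to reach; passing from $G_i$ to $G_{i+1}$ lengthens the tail lying inside $L_s$ by at most one node and shortens every other tail by exactly one node, destroying it if it had a single node; and a tail of $G_i$ inside an arm $L_p$ with $p\ne s$ consists of a single node precisely when that node is the leaf of $L_p$, in which case this leaf is at distance exactly $k-1$ from $u_{i+1}$.

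Now I case on $j^\ast$. If $j^\ast=d$, then $u_{j^\ast}=c$ and $b(G\setminus N_{k-1}[c])\le k-1$: if $G\setminus N_{k-1}[c]$ still has at least two components we are in case $(i)$, and otherwise (note $d\ge1$ since $x\ne c$) the structural fact applied to the last step $u_{d-1}\to c$ exhibits an arm $L_p\ne L_s$ whose tail in $G_{d-1}$ was a single node that then vanished, so the leaf of $L_p$ is at distance $k-1$ from $c$, which is case $(ii)$. If $j^\ast<d$, set $x':=u_{j^\ast}$, which lies on the $x$--$c$ path and is distinct from both $x$ and $c$; then $b(G\setminus N_{k-1}[x'])\le k-1$ and $G\setminus N_{k-1}[x']$ has exactly one component. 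Applying the structural fact to the step $u_{j^\ast-1}\to u_{j^\ast}$, the disappearance of a component forces an arm $L_p\ne L_s$ whose single-node tail vanished, so the leaf of $L_p$ is at distance $k-1$ from $x'$, while the surviving component—being the one tail that does not shrink at this step—is the one inside $L_s$; this is case $(iii)$.

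The main obstacle is making the structural paragraph fully rigorous and, within it, nailing down for case $(iii)$ that the lone surviving tail really lies in $L_s$: here one must use that the slide was not stopped earlier (i.e.\ that $G_i$ has at least two components for all $i<j^\ast$), together with the fact that $L_s$ is the only arm whose tail can grow under the slide, to exclude the possibility that a longer tail of some other arm is the survivor. A secondary, mostly routine task is to dispose of the degenerate boundary instances—$x$ adjacent to $c$, a tail of some $G_i$ already being a single path at the outset ($j^\ast=0$), or $G\setminus N_{k-1}[c]$ empty—which fall under cases $(ii)$ or $(iii)$ read with the natural conventions and can be checked directly from the coordinate description, possibly with the help of Corollary~\ref{cov3} and Corollary~\ref{subforest}. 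Once the structural fact is in hand, the three cases follow at once, so the bulk of the work is the bookkeeping inherited from Lemma~\ref{key2}.
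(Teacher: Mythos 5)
Your proposal follows essentially the same route as the paper: the paper's entire proof is a two-sentence remark that the result follows by applying Lemma~\ref{key2} finitely many times (plus a figure illustrating the three cases), which is precisely your slide-the-ball-toward-$c$ iteration with the stopping index $j^\ast$ determining which of $(i)$, $(ii)$, $(iii)$ occurs. Your write-up is in fact considerably more explicit than the paper's about the component bookkeeping and the boundary instances, and the gaps you flag (the surviving tail lying in $L_s$, the $j^\ast=0$ case) are ones the paper glosses over entirely.
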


\begin{proof}
Since the spider graph $G$ satisfies the conditions in Lemma \ref{key2}, by applying Lemma \ref{key2} for a finite number of times we derive the desired result.
In Figure \ref{head} we see a layout of the three different cases stated in the lemma.
\end{proof}

A \emph{perfect spider of radius $r$} is a spider graph $G$ with a unique centre node $c$ such that $d(v, c) = r$ for every leaf $v\in G$. We denote the set of all perfect spider trees of radius $k$ with $\mathrm{PS}_k$.

A \emph{$k$-burning maximal spider graph}, is a spider graph with spider head $c$ that its node set can be decomposed into a perfect spider graph $F = N_{k-1}[s] \in \mathrm{PS}_{k-1}$, where $s$ is a node with $d(s,c) \leq k-1$, and a graph  $H \in \mathrm{MPF}_{k-1}$. We denote the set of all $k$-burning maximal spider graphs by $k$-$\mathrm{BMS}$.
By above Lemma, we can see that there are two different types of the graphs in $k$-$\mathrm{BMS}$ like $G$: either $G$ is a graph for which the centre node $s$ of the perfect spider graph in the decomposition of $G$ is the spider head, or $G$ is a graph such that the centre node $s$ is not the spider head. If the latter holds, then by Lemma \ref{burningspiderhead} part $(iii)$, we conclude that the path-forest $G\setminus N_{k-1}[s]$ must be a single path of order $(k-1)^2$.

Note that the path-forest that appears in decomposing a $k$-$\mathrm{BMS}$ forms a decomposed spider as described in Conjecture \ref{lowk}. Now, we have the following useful theorem that also shows the truth of Conjecture \ref{lowk} for $k$-$\mathrm{BMS}$ trees.

\begin{theorem}\label{kbms}
If $G$ is a $k$-$\mathrm{BMS}$ with spider head $c$, then $b(G) = k$. 
\end{theorem}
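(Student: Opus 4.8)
The plan is to establish the two inequalities $b(G) \le k$ and $b(G) \ge k$ separately. For the upper bound, recall that by definition a $k$-$\mathrm{BMS}$ graph $G$ decomposes as $V(G) = V(F) \cupdot V(H)$, where $F = N_{k-1}[s] \in \mathrm{PS}_{k-1}$ for some node $s$ with $d(s,c)\le k-1$, and $H \in \mathrm{MPF}_{k-1}$. First I would burn $s$ as $x_1$; since $F$ is a perfect spider of radius $k-1$ centred at $s$, the whole of $F$ is covered by $N_{k-1}[x_1]$. It remains to cover $H$, which is a maximal path-forest of order $(k-1)^2$, hence by the decomposition underlying $\mathrm{MPF}_{k-1}$ it can be split into paths of orders $1,3,\dots,2(k-1)-1$. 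Taking $x_i$ ($2\le i\le k$) to be the centre of the path of order $2(k-i)+1$ in this decomposition gives $N_{k-i}[x_i]$ equal to that path, so $\bigcup_{i=1}^k N_{k-i}[x_i] = V(G)$. The one subtlety is to check that the distance conditions $d(x_i,x_j)\ge j-i$ hold, so that equation~(\ref{eqq}) applies; this follows because the $N_{k-i}[x_i]$ for $i\ge 2$ are pairwise disjoint subpaths (distinct components or segments at mutual distance bounded below by the relevant radii inside $H$, which is joined only through edges not interfering), and $x_1 = s$ is at distance $\ge k-1 \ge j-1$ from each $x_j$ inside $F\cup H$ by the perfect-spider and decomposed-spider structure. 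Then Corollary~\ref{cov3} (or directly equation~(\ref{eqq})) yields $b(G)\le k$.

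For the lower bound $b(G)\ge k$, I would invoke the decomposed-spider machinery. By the remark preceding the theorem, the path-forest $H = G \setminus N_{k-1}[s]$ sits inside $G$ as a decomposed spider: its components are terminal paths $\{Q_i\}$ whose non-terminal endpoints $v_i$ attach to the perfect spider $F$, and any path between $v_i$ and $v_j$ meets $Q_i\cup Q_j$ only at $v_i,v_j$. Since $H\in\mathrm{MPF}_{k-1}$ we have $b(H) = b(\bigcup_i Q_i) = k-1$. To apply Conjecture~\ref{lowk} (which is what the theorem explicitly claims to verify for $k$-$\mathrm{BMS}$ trees), I must check $d(v_i,v_j)\ge 2(k-1)$ for all $i\ne j$: since $F$ is a perfect spider of radius $k-1$ with centre at distance $\le k-1$ from $c$, and the $v_i$ are leaves of $F$ (or lie on $F$ at the attachment points of the arms), any two of them are at distance at least — here is the crux — roughly $2(k-1)$ apart. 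Granting that, the hypotheses of the conjecture's statement are met with the role of "$k$" there played by $k-1$, giving $b(G)\ge (k-1)+1 = k$. Combined with the upper bound this yields $b(G)=k$.

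The main obstacle is the lower bound, and specifically two linked points. First, Conjecture~\ref{lowk} is stated as a conjecture, not a theorem, so I cannot simply cite it; instead I must prove the needed special case directly. The right tool is Theorem~\ref{lowkt}, which is proved, but it requires the extra hypothesis $t\ge k$ (number of terminal paths at least the target). When $F$ has its centre $s$ equal to the spider head $c$, the perfect spider $F\in\mathrm{PS}_{k-1}$ has $s\ge 3$ arms but possibly fewer than $k$ of them; so I would need to argue as in the proof of Theorem~\ref{lowkt} but handle the $t<k$ case by the same "the last fire source must be a leaf, and then a neighbour of its non-terminal endpoint cannot be reached in time" argument used there, adapting the counting so that it works once we also feed in $b(H)=k-1$ rather than only $t\ge k$. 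Second, I must nail down the distance estimates $d(v_i,v_j)\ge 2(k-1)$ cleanly, distinguishing the case $s=c$ (all $v_i$ are leaves of $F$ at distance exactly $k-1$ from $c$, so pairwise distance $2(k-1)$) from the case $s\ne c$, where by Lemma~\ref{burningspiderhead}(iii) the path-forest $G\setminus N_{k-1}[s]$ is a single path of order $(k-1)^2$ — so $t=1$, the decomposed-spider framework degenerates, and instead I would argue $b(G)\ge k$ by observing $G$ contains an isometric path of order $(k-1)^2 + \text{(something)}$ large enough, or more simply that $G$ isometrically contains a path on $(k-1)^2 + 2(k-1) + 1 = k^2$ vertices (the single path $H$ of order $(k-1)^2$ extended through $s$ along a longest arm of $F$), whence $b(G)\ge b(P_{k^2}) = k$ by Theorem~\ref{path} and Corollary~\ref{subforest}. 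Assembling these cases, with the adapted version of the Theorem~\ref{lowkt} argument for the $s=c$ case, completes the lower bound.
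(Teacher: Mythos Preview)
Your upper bound is fine, and your treatment of the case $s\neq c$ is correct and in fact cleaner than the paper's: the arm $L_s$ has length $d(s,c)+(k-1)+(k-1)^2$, any other arm has length exactly $k-1-d(s,c)$ (this is forced by $F\in\mathrm{PS}_{k-1}$), and the path joining their leaves through $c$ has exactly $k^2$ vertices, so Corollary~\ref{subforest} and Theorem~\ref{path} give $b(G)\ge k$ immediately.

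The genuine gap is in the case $s=c$. Here $H=G\setminus N_{k-1}[c]\in\mathrm{MPF}_{k-1}$ can have any number $t$ of components with $1\le t\le k-1$. Your isometric-path argument only reaches $P_{k^2}$ when $t\le 2$ (the two longest components then account for all $(k-1)^2$ extra nodes), and Theorem~\ref{lowkt} applied with parameter $k-1$ needs $t\ge k-1$. For $3\le t\le k-2$ neither tool applies, and your proposed ``adaptation'' of the Theorem~\ref{lowkt} proof is not a proof: that argument works precisely because the number of sources equals the number of leaves, which pins $x_k$ to a leaf; once there are more sources than terminal paths this pigeonhole collapses, and it is not clear how feeding in $b(H)=k-1$ repairs it. You are, in effect, assuming the relevant case of Conjecture~\ref{lowk}.

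The paper avoids this entirely by a different mechanism. It argues by contradiction: if $b(G)=t\le k-1$, Theorem~\ref{key} produces an optimum sequence with $d(x_1,c)\le t-1$, and then Lemma~\ref{key2} (applied repeatedly) slides $x_1$ to $c$ while preserving $b(G\setminus N_{t-1}[x_1])\le t-1$. Since $N_{t-1}[c]\subseteq N_{k-1}[c]$, the path-forest $H=G\setminus N_{k-1}[c]$ is an isometric subforest of $G\setminus N_{t-1}[c]$, whence $b(H)\le t-1\le k-2$ by Corollary~\ref{subforest}, contradicting $b(H)=k-1$. The same sliding argument, with minor case analysis, also handles $s\ne c$. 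So the key lemmas you are missing are Theorem~\ref{key} and Lemma~\ref{key2}; these are what make the lower bound go through uniformly, without any appeal to the unproved Conjecture~\ref{lowk}.
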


\begin{proof}
Let $G$ be a $k$-$\mathrm{BMS}$ with spider head $c$. If the centre of the perfect spider in decomposing $G$ equals $c$, then it implies that $b(G\setminus N_{k-1}[c]) = k-1$.
In this case, $G_1 = G \setminus N_{k-1}[c]$ is in $\mathrm{MPF}_{k-1}$. By contradiction suppose that $b(G) = t \leq k-1$. Thus, by Theorem \ref{key}, there must be an optimum burning sequence for $G$ like $(x_1, x_2, \ldots, x_t)$ such that $d(x_1 , c)\leq t-1\leq k-2$.

If $x_1 =c$, then $G\setminus N_{k-1}[c]$ is an isometric subforest of $G\setminus N_{t-1}[c]$, and therefore, $b(G\setminus N_{t-1}[c]) \leq t-1\leq k-2$, which is a contradiction.

If $x_1\neq c$, then note that all the arms of $G$ are of length at least $k-1$, and since $b(G\setminus N_{k-1}[c]) = k-1$, there must be at least one arm of $G$ that is of length at least $k-1 + 2k-3 = 3k-4$. Thus, $G\setminus N_{t-1}[c]$ must have at least two non-empty components, and therefore, by Lemma \ref{key2}, we have that $b(G\setminus N_{t-1}[c]) \leq t-1\leq k-2$, which is a contradiction, as $G\setminus N_{k-1}[c]$ is an isometric subforest of $G\setminus N_{t-1}[c]$.
Hence, in both cases we find a contradiction, and therefore, $b(G) \geq k$.

If the centre of the perfect spider in decomposing $G$ is a node $s \neq c$, then as we discussed before the theorem's statement, the graph $G_1 = G \setminus N_{k-1}[s]$ is a single path of order $(k-1)^2$. Let $L_s$ be the arm of $G$ with $s\in L_s$, and assume that $v_s$ is the node in $L_s$ such that $d(s,v_s) = k-1$. Also, assume that $P$ is the path between $v_s$ and $s$, and $P'$ is the path connecting $s$ to $c$ excluding $s$.

By contradiction suppose that $b(G) = t \leq k-1$. Thus, by Theorem \ref{key}, there must be an optimum burning sequence for $G$ like $(x_1, x_2, \ldots, x_t)$ such that $d(x_1 , c)\leq t-1\leq k-2$. We consider different possibilities for $x_1$ as follows:

If $x_1$ is in $G\setminus (L_s \setminus P')$, then clearly $G\setminus N_{k-1}[s]$ is an isometric subforest of $G\setminus N_{t-1}[c]$, and therefore, we must have $b(G_1) \leq t-1\leq k-2$, which is a contradiction.

If $x_1$ is in $P$, then let $x'$ be the neighbour of $s$ on the path connecting $s$ to $c$. Note that all the leaves of $G$, except for the leaf in $L_s$, are of  distance $k-1$ from $s$. Thus, $G\setminus N_{t-1}[x_1]$ must have at least two non-empty components, and therefore, by applying Lemma \ref{key2} for a finite number of times, we have that $b(G\setminus N_{t-1}[x']) \leq t-1\leq k-2$, which is a contradiction, as $G\setminus N_{k-1}[s]$ is an isometric subforest of $G\setminus N_{t-1}[x']$.
Hence, in both cases we find a contradiction, and therefore, $b(G) \geq k$.
\end{proof}

By the arguments in proof of Theorem~\ref{kbms}, we can conclude that the spider graphs in $2$-$\mathrm{MBS}$ can be decomposed into a perfect spider $SP(s,1)$  and a single node $ P_1$, with $s\geq 3$; that is, a spider with $s-1$ arms of length one and an arm of length two.
Now, we can present an algorithm for finding the burning number of a spider tree, as follows.
Note that the burning number of every spider graph is at least two.
We denote the set of all perfect spider trees of radius $k$ with $t$ arms by $\mathrm{PS}^t_k$.
We denote the set of all $k$-burning maximal spider graphs with $t$ arms by $k$-$\mathrm{BMS}^t$.

\begin{algorithm}\label{alg2}
Suppose that $G$ is a spider tree with arms $L_1,L_2, \ldots ,L_t$, for a constant $t\geq 1$, such that the length of each arm $L_i$ is denoted by $l_i$, and $l_1\geq l_2 \geq \cdots \geq  l_t$. Let $m$ be a positive integer for which $l_1 \leq m$; that is, the length of each arm in $G$ is bounded above by $m$. Then we perform the following steps until $G \subseteq H$, for some $H\in k\text{-}\mathrm{BMS}^t$ where $k\geq 2$.

\medskip

{\bf Stage 1.} For the initial case $k=2$, we put the graph $SP(t,1)$  in $\mathrm{PS}^t_1$.
Then we add a single node to one of the arms in $SP(t,1) \in \mathrm{PS}^t_1$, and we put the resulting graph $H$ in $2$-$\mathrm{MBS}^t$. 

If $G\subseteq H$, then return $b(G) = 2$; otherwise, go to Stage 2.

\medskip

{\bf Stage 2.} For $k \geq 3$, we perform the following steps:

\medskip

{\bf Stage 2.1.} For $0\leq i \leq k-2$, we make a spider graph with $t-1$ arms of length $k-1 - i$, and then we add an additional arm of length $i+ k-1$ to it. We call the resulting spider (with $t$ arms) by $H_i$ and we put it in $\mathrm{PS}^t_{k-1}$. 

\medskip

{\bf Stage 2.2.} For $1\leq s \leq k-1$, and each $F \in \mathrm{MPF}^s_{k-1}$ (generated by Algorithm \ref{alg1} for the graph $G' = (l_1, l_2, \ldots, l_t )$), we join an end point of each component of $F$ to a distinct leaf of $H_0 \in \mathrm{PS}^t_{k-1}$, and we call the resulting graph by $F'$.
Then we add $F'$ to $k$-$\mathrm{MBS}^t$.

If $G \subseteq F'$, then stop and return $b(G) = k$.

\medskip

{\bf Stage 2.3.} For $1\leq i \leq k-2$, we join the end point of longest arm of $H_i$ to a path of order $(k-1)^2$ in $\mathrm{MPF}^1_{k-1}$, and we call the resulting graph by $H'_i$.
Then we add $H'_i$ to $k$-$\mathrm{MBS}^t$.

If $G \subseteq H'_i$, then stop and return $b(G) = k$.
\end{algorithm}

If Algorithm \ref{alg2} stops at $i=k$, then it means that  $G$ is a subgraph of a graph in $k$-$\mathrm{MBS}^t$. By Theorem \ref{kbms}, we know that the burning number of a graph in $i$-$\mathrm{MBS}^t$ equals $i$. Hence,  by Corollary \ref{subforest} from \cite{thesis}, we conclude that $b(G) = k$.
We have the following theorem about the complexity of Algorithm~\ref{alg2}.

\begin{theorem}\label{polyalg2}
Suppose that $G $ is a spider tree with arms $L_1,L_2, \ldots ,L_t$, for a constant $t\geq 1$, in which the length of each arm $L_i$ is denoted by $l_i$, and $l_1\geq l_2 \geq \cdots \geq  l_t$. Let $m$ be a positive integer for which $l_1 \leq m$; that is, the length of each arm in $G$ is bounded above by $m$. If $t $ is a constant in terms of $m$, then Algorithm \ref{alg2} finds the burning number of $G$ in polynomial time in terms of the input.
\end{theorem}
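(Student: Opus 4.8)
We follow the same scheme as in the proof of Theorem~\ref{polyalg1}. Since $t$ is fixed and every arm of $G$ has length at most $m$, we have $|V(G)|\le tm+1=O(m)$, so it suffices to bound the running time of Algorithm~\ref{alg2} by $O(m^{t+c})$ for an absolute constant $c$; this is polynomial in $|V(G)|$, hence in the length of the input. The four quantities to control are: (i) the number of values of $k$ for which the main loop executes; (ii) the total number of graphs the algorithm constructs; (iii) the number of vertices of each such graph, hence the cost of constructing it; and (iv) the cost of a single subgraph test $G\subseteq H$.

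For (i), let $G'$ be the path-forest whose components are the $t$ arms of $G$. By Theorem~\ref{path-forest}, $b(G')\le\lceil\sqrt{|V(G')|}\,\rceil+t-1=O(\sqrt m)$. Now $G$ is obtained from $P_1\cupdot G'$ by adding, one at a time, an edge joining the extra vertex (the eventual spider head) to an endpoint of each component, so $t$ applications of Lemma~\ref{glue} give $b(G)\le b(P_1\cupdot G')$; and prefixing an optimal burning sequence of $G'$ by that extra isolated vertex shows $b(P_1\cupdot G')\le b(G')+1$. Hence $b(G)\le b(G')+1=O(\sqrt m)$. By Theorem~\ref{kbms} every member of $j$-$\mathrm{MBS}^t$ has burning number $j$, so by Corollary~\ref{subforest} the loop cannot stop before $k=b(G)$; and the structural results preceding the algorithm (Theorem~\ref{key} on the location of the first fire, the case analysis of Lemma~\ref{burningspiderhead}, and Theorem~\ref{kbms}) guarantee that $G$ embeds into some member of $b(G)$-$\mathrm{MBS}^t$ actually produced by the algorithm, so the loop halts at $k=b(G)$ and runs for only $O(\sqrt m)=O(m)$ values of $k$. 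Every such $k$ satisfies $k-1\le b(G')$, so all families $\mathrm{MPF}^s_{k-1}$ invoked in Stage~2.2 have in fact been generated by the (single) call to Algorithm~\ref{alg1} on $G'$.

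For (ii)--(iii): Stage~2.1 builds one perfect spider $H_i$ for each $0\le i\le k-2$, i.e.\ $O(m)$ spiders with $t$ arms of length $O(k)=O(m)$. In Stage~2.2, for each $s$ (necessarily $s\le t$, since $H_0$ has only $t$ leaves) and each $F\in\mathrm{MPF}^s_{k-1}$ produced by Algorithm~\ref{alg1} on $G'$, one spider $F'$ is built; by the count in the proof of Theorem~\ref{polyalg1}, Algorithm~\ref{alg1} produces $O(m^t)$ maximal path-forests in total, each with at most $t$ components of length $O(m)$, so each $F'$ is a spider with $t$ arms of length $O(m)$, and over all iterations Stage~2.2 builds $O(m^t)$ graphs (each $F$ being used in exactly one iteration). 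Stage~2.3 attaches the single path $P_{(k-1)^2}$, of order $O(m)$, to $H_i$ for $1\le i\le k-2$, giving $O(m)$ graphs per iteration. Thus Algorithm~\ref{alg2} constructs $O(m^t)$ graphs in all, each on $O(m)$ vertices, and writing down any one of them costs $\mathrm{poly}(m)$.

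For (iv): since $G$ and every constructed $H$ are spiders with the same number $t$ of arms, $G\subseteq H$ holds iff, after sorting both arm-length sequences into non-increasing order, each arm of $G$ is at most the corresponding arm of $H$; this costs $O(t\log t)=O(1)$ per test, exactly as the analogous path-forest test in the proof of Theorem~\ref{polyalg1}. Multiplying (i)--(iv) and adding the $O(tm^t)$ cost of the single call to Algorithm~\ref{alg1} (Theorem~\ref{polyalg1}), the total running time is $O(m^{t+c})$ for an absolute constant $c$; as $t$ is fixed this is polynomial in $m$, hence in the input. The step I expect to demand the most care is (i): one must check that the loop really halts, and at a $k$ small enough that the path-forest families needed in Stage~2.2 have already been generated by Algorithm~\ref{alg1}; this is exactly where the structural machinery (Theorems~\ref{key} and~\ref{kbms}, Lemma~\ref{burningspiderhead}, and the bound $b(G)\le b(G')+1$) is used. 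The remaining estimates are the same bookkeeping as in the proof of Theorem~\ref{polyalg1}.
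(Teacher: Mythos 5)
Your proposal is correct and follows essentially the same route as the paper's proof: bound the number of iterations $k$, count the graphs generated via the $O(tm^t)$ estimate from Theorem~\ref{polyalg1}, and observe that each subgraph test reduces to comparing $t$ arm lengths. Your version is slightly more careful in two places the paper glosses over — you justify termination at $k=b(G)$ explicitly and derive the tighter bound $b(G)\le b(G')+1=O(\sqrt m)$ via Lemma~\ref{glue} where the paper simply uses $k\le \mathrm{radius}(G)+1\le m+1$ — but the overall bookkeeping and the resulting polynomial bound are the same.
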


\begin{proof}
Given the graph $G$, suppose that for some $k\geq t$, Algorithm \ref{alg2} stops by recognizing $G$ as a subgraph of a graph in $\mathrm{MBS}^t_k$; that is, $b(G) =k$. In Algorithm \ref{alg2}, we first generate all the perfect spider graphs of radius $i$ with $t$ arms, for $1\leq i \leq k$. Then at Stage 2.2, we need to perform Algorithm \ref{alg1} for the graph $(l_1, l_2, \ldots, l_t)$ which satisfies all the conditions in Theorem \ref{polyalg1}. Hence, we perform at most $O(tm^t)$ steps to find all the maximal path-forests generated by Algorithm \ref{alg1} at Stage 2.2.

On the other hand, we know that $k = b(G) \leq \mathrm{radius}(G) + 1 \leq m+1 = O(m)$. Thus, $\frac{k(k+1)}{2} = O(m^2)$.
Note that the number of the perfect spider graphs that we generate in Algorithm \ref{alg2} for each $1\leq i\leq k$ equals $i$. Therefore, the total number of the graphs that we create and consider by Algorithm \ref{alg2} is asymptotically of order
$$\sum_{i=1}^k k O(tm^t) = \frac{k(k+1)}{2} O(tm^t) = O(tm^{t+2}).$$
Finally, note that each time that we add a new spider graph $F$ to $\mathrm{MBS}^t_k$, for $k\geq 2$, we compare $G$ with $F$. We can simply do this comparison by comparing the lengths of the arms between $G$ and $F$. Since, $G$ and $F$ both have $t$ arms, then the total number of the steps that we perform in Algorithm \ref{alg2} is bounded above by $O(t^2m^{t+2})$.
Since $t$ is a fixed constant in terms of $m$, then Algorithm \ref{alg2} is a polynomial time algorithm in terms of the input.
\end{proof}


\begin{thebibliography}{99}

\bibitem{bbm} J.\ Balogh, B.\ Bollob\'{a}s, R.\ Morris, Graph bootstrap percolation, \emph{Random Structures \& Algorithms} Special Issue \textbf{41} (2012) 413–-440.


\bibitem{bonato2} A.\ Bonato, J.\ Janssen, E.\ Roshanbin, How to burn a graph, accepted to \emph{Internet Mathematics} 2015, arXiv:1507.06524.

\bibitem{bonato} A.\ Bonato, R.J.\ Nowakowski, \emph{The Game of Cops and Robbers on Graphs}, American Mathematical Society, Providence, Rhode Island, 2011.

\bibitem{dr} P.\ Domingos, M.\ Richardson, Mining the network value of customers, In: \emph{Proceedings of the 7th International Conference
on Knowledge Discovery and Data Mining (KDD)}, 2001.


\bibitem{thesis} E.\ Roshanbin, Burning a graph as a model of social contagion, \emph{PhD Thesis}, Dalhousie University, 2015.


\bibitem{fking} S.\ Finbow, A. King, G.\ MacGillivray, R.\ Rizzi, The firefighter problem for graphs of maximum degree three, \emph{Discrete Mathematics} \textbf{307} (2007) 2094-–2105.

\bibitem{fmac} S.\ Finbow, G.\ MacGillivray, The Firefighter problem: a survey of results, directions and questions, \emph{Australasian Journal of
Combinatorics} \textbf{43} (2009) 57--77.

\bibitem{gary} M.R.\ Garey, D.S.\ Johnson, \emph{Computers and Intractability: A Guide to the Theory of \textbf{NP}-Completeness},
W.H. Freeman,  1979.

\bibitem{dbook} T.W.\ Haynes, S.T.\ Hedetniemi, P.J.\ Slater, \emph{Fundamentals of
Domination in Graphs} Marcel Dekker, New York, 1998.

\bibitem{hulet} H.\ Hulett, T.G.\ Will, G.J.\ Woeginger, Multigraph realizations of degree sequences: Maximization is easy, minimization is hard,
\emph{Operations Research Letters} \textbf{36} (2008) 594--596.

\bibitem{kkt} D.~Kempe, J.~Kleinberg, E.~Tardos, Maximizing the spread of influence through a social network, In: \emph{Proceedings of the 9th International Conference
on Knowledge scovery and Data Mining (KDD)}, 2003.

\bibitem{ktt1} D.~Kempe, J.~Kleinberg, E.~Tardos, Influential nodes in a diffusion model for social networks, In:  \emph{Proceedings 32nd
International Colloquium on Automata, Languages and Programming(ICALP)}, 2005.

\bibitem{kramer} A.D.I.\ Kramer, J.E.\ Guillory, J.T.\ Hancock, Experimental evidence of massive-scale emotional contagion through social networks, \emph{Proceedings of the National Academy of Sciences} \textbf{111} (2014) 8788-–8790.

\bibitem{mr} E.\ Mossel, S.\ Roch, On the submodularity of influence in social networks, In: \emph{Proceedings of 39th Annual ACM Symposium
on Theory of Computing(STOC)}, 2007.

\bibitem{rd} M.\ Richardson, P.\ Domingos, Mining knowledge-sharing sites for viral marketing, In \emph{Proceedings of the 8th International
Conference on Knowledge scovery and Data Mining (KDD)}, 2002.

\bibitem{west} D.B.\ West, \emph{Introduction to Graph Theory, 2nd edition},
Prentice Hall, 2001.

\end{thebibliography}
\end{document}